\documentclass[12pt]{article}
\usepackage{amsthm}
\usepackage{fullpage}
\usepackage{amsmath}
\usepackage{mathrsfs}
\usepackage{amssymb}
\usepackage{enumerate}
\usepackage{array}
\usepackage[pdfstartview=FitH,backref,colorlinks]{hyperref}
\usepackage{graphicx}
\setlength{\parindent}{0.2in}

\def\qed{\hfill \ifhmode\unskip\nobreak\fi\quad\ifmmode\Box\else$\Box$\fi\\ }
\usepackage{subfigure}

\newcommand{\beq}[1]{\begin{equation}\label{#1}}
\newcommand{\eeq}{\end{equation}}
\newcommand{\blem}[1]{\begin{lemma}\label{#1}}
\newcommand{\elem}{\end{lemma}}
\newcommand{\bth}[1]{\begin{theorem}\label{#1}}
\newcommand{\enth}{\end{theorem}}
\newcommand{\brem}[1]{\begin{remark}\label{#1}}
\newcommand{\erem}{\end{remark}}


\usepackage[colorinlistoftodos,prependcaption,textsize=scriptsize, textwidth=23mm,]{todonotes}

\newcommand{\xtodo}[1]{\todo[color=yellow]{X: #1}}

\newtheorem{theorem}              {Theorem}
\newtheorem{lemma}      [theorem] {Lemma}

\newtheorem{corollary}      [theorem] {Corollary}
\newtheorem{remark}      [theorem] {Remark}

\newtheorem{claim}{Claim}[section]

\newcommand{\bc}{{\bf c}}

\usepackage{color}

\begin{document}

\author{Alexandr Kostochka\thanks{
Department of Mathematics, University of Illinois at Urbana-Champaign, Urbana, IL 61801, USA.
{\tt kostochk@illinois.edu.} The research of this author is partially supported by NSF grant   DMS-2153507 and NSF RTG grant DMS-1937241.
}
\and {Jingwei Xu}\thanks{
Department of Mathematics, University of Illinois at Urbana-Champaign, Urbana, IL 61801, USA.
{\tt jx6@illinois.edu.}}
\and Xuding Zhu\thanks{Department of Mathematics, Zhejiang Normal University,  China.  E-mail: {\tt xdzhu@zjnu.edu.cn}. The research of this author is partially supported by
Grants  NSFC 12371359, U20A2068}}

\title{Sparse critical graphs for defective $(1,3)$-coloring}

\maketitle

\begin{abstract}
A graph $G$ is $(1,3)$-colorable if its vertices can be partitioned into subsets $V_1$ and $V_2$ so
that every vertex in $G[V_1]$ has degree at most $1$
and every vertex in $G[V_2]$ has degree at most $3$.
We prove that every graph
with maximum average degree at most 28/9
 is $(1, 3)$-colorable.  
\\
\\
 {\small{\em Mathematics Subject Classification}: 05C15, 05C35.}\\
 {\small{\em Key words and phrases}:  Defective coloring,  critical graphs, sparse graphs.}
\end{abstract}

\section{Introduction}

A partition of the vertex set of a graph $G$ into $k$
 subsets $V_1,\ldots, V_k$ is a {\em defective $(d_1,\ldots,d_k)$-coloring of $G$} (or, simply a
{\em $(d_1,\ldots,d_k)$-coloring) of $G$} if
 for each $i\in[k]$, every vertex in $V_i$ has at most $d_i$ neighbors in $V_i$. 
 This notion generalizes those of proper $k$-coloring (when $d_1 =\ldots = d_k = 0$) and of $d$-improper $k$-coloring (when $d_1 =\ldots = d_k = d$).

Probably, the first result on $d$-defective coloring for $d > 0$ is due to Gerencs\'er~\cite{Ge}: 
he showed that each graph $G$ with
maximum degree $D$ has a $1$-defective $(\lfloor D/2\rfloor+1)$-coloring. This was generalized by
Lov\' asz~\cite{LL66}, who proved  that if
$(d_1+1)+(d_2+1)+\ldots +(d_k+1)\leq D+1$ then 
each graph  with
maximum degree $D$ has a $(d_1,\ldots,d_k)$-coloring.
Both bounds are sharp for  complete graphs.

By now, there are many papers on defective coloring and its variations such as defective list coloring and defective DP-coloring, see e.g.~\cite{Ar1,CCW1,EKKOS,HW18,KYu,LL66,SN18,OOW,VW,Woodall1,BIMR11,BIMR12,BK14,KKZ14,KKZ15,JKMSX,JKMX,KX2}. A good survey on the topic is~\cite{WoodS}.

While it is easy to check whether a graph is bipartite, i.e. is $(0,0)$-colorable, for every $(i,j)\neq (0,0)$,  the problem
 to decide whether
 a graph  has an $(i,j)$-coloring  is  NP-complete.  
Esperet, Montassier, Ochem, and Pinlou~\cite{EMOP} showed that
 even the problem of checking whether a given planar graph of girth $9$ has a $(0,1)$-coloring is NP-complete. This makes defective colorings with two colors interesting. There was a series of results on $(i,j)$-colorings of sparse graphs. A number of them
 was showing that graphs $G$ with low {\em maximum average degree}, $mad(G):=\max_{G'\subseteq G}\frac{2|E(G')|}{|V(G')|}$, are $(i,j)$-colorable.

 In particular, Borodin,  Ivanova,  Montassier, Ochem and Raspaud~\cite{BIMOR10} proved that every graph $G$ with $mad(G)<\frac{3j+4}{j+2}$ is $(0,j)$-colorable, and Borodin,  Ivanova,  Montassier  and Raspaud~\cite{BIMR12} showed  that for $j\geq 2$ every graph $G$ with $mad(G)<\frac{10j+22}{3j+9}$ is $(1,j)$-colorable. On the other hand, they presented
 non-$(1,j)$-colorable graphs with maximum average degree arbitrarily close to $\frac{14j}{4j+1}$.
  Borodin, Kostochka and Yancey~\cite{BKY13} proved 
 that every graph $G$ with $mad(G)\leq\frac{14}{5}$ is $(1,1)$-colorable, which is exact.

 A finer than $mad(G)$ measure of graph sparsity is the notion of $(a,b)$-sparse graphs. For a positive real $a$ and any real $b$, a graph $G$
 is {\em $(a,b)$-sparse} (respectively, {\em $(a,b)^*$-sparse}) if for every non-empty subgraph $G'$ of $G$, $|E(G')|\leq a|V(G')|+b$ (respectively, $|E(G')|< a|V(G')|+b$). In these terms, inequality $mad(G)< m$ means $G$ is $(m/2,0)^*$-sparse.
 Borodin and Kostochka proved in~\cite{BK11} that each
 $(\frac{6}{5},\frac{2}{5})$-sparse graph is $(0,1)$-colorable and in~\cite{BK14} that for $j\geq 2i+2$ each
 $(2-\frac{j+2}{(i+2)(j+1)},\frac{1}{j+1})$-sparse graph is $(i,j)$-colorable. Both results are tight, so the only values of $j$ for which we do not know exact bound on sparsity of a graph $G$ ensuring that
 $G$ is $(1,j)$-colorable are $2$ and $3$. In this paper, we consider $j=3$.

 Let $(a,b)$ be the lexicographical infimum of the pairs such that every $(a,b)$-sparse graph is $(1,3)$-colorable.
 By the results in~\cite{BIMR12} mentioned above,
 $\frac{13}{9}\leq a\leq \frac{42}{13}$. The  construction 
 of $G_i(j,k)$ in Section 2 of~\cite{BK14} for $j=1$ and $k=3
 $ 
 yields examples of $(\frac{19}{12},\frac{1}{4})$-sparse graphs that are not $(1,3)$-colorable, which is a better upper bound than in~\cite{BIMR12}. 
  The main result of this paper is:

\begin{theorem}
 \label{th0}
 If $|E(H)| \le \frac{14|V(H)| +5}{9}$ for every subgraph $H$ of $G$, then $G$ is $(1,3)$-colorable.
\end{theorem}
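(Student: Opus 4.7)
The plan is to argue by contradiction on a vertex-minimum counterexample $G$. The sparsity hypothesis is hereditary, so every proper subgraph of $G$ is $(1,3)$-colorable while $G$ itself is not, and the goal is to derive a contradiction via reducibility plus discharging.

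First, I would compile a list of \emph{reducible configurations} that cannot occur in $G$. The template is: remove a small subgraph $S$, apply minimality to $(1,3)$-color $G-S$, and extend the coloring back onto $S$. A boundary vertex $u\in V(G)\setminus V(S)$ can obstruct the extension only if it is \emph{saturated}, meaning $u\in V_1$ already has a $V_1$-neighbor, or $u\in V_2$ already has three $V_2$-neighbors. Routine arguments give $\delta(G)\ge 2$; the real work is to forbid richer small configurations (for example, $2$-vertices with specific $2$- or $3$-vertex neighbors, short paths and stars of low-degree vertices), which requires combining extension with local recolorings — e.g.\ swapping $u$ from $V_2$ to $V_1$ when its own $V_2$-neighbors are not themselves saturated, then treating the removed vertex in the newly freed class.

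Next, I would run a discharging argument. Assign initial charge $\mu(v)=d(v)-28/9$; by the hypothesis,
\[
\sum_{v\in V(G)}\mu(v)=2|E(G)|-\frac{28}{9}|V(G)|\le \frac{10}{9}.
\]
Only $2$-vertices (deficit $10/9$) and $3$-vertices (deficit $1/9$) carry negative charge, while each vertex of degree $k\ge 4$ has surplus $k-28/9\ge 8/9$, growing linearly in $k$. I would define rules by which degree-$\ge 4$ vertices pay along edges and, where needed, along length-$2$ paths through $3$-vertices to reach $2$-vertices, with rates tuned so that after redistribution every vertex has nonnegative final charge. Step~1 ensures each negatively-charged vertex has enough high-degree donors within distance $\le 2$; any strict surplus left anywhere then contradicts the bound $10/9$.

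The main obstacle will be Step~1. Since $28/9\approx 3.11$ is only a hair above $3$, most vertices of $G$ are forced to have degree exactly $3$ and the discharging has almost no slack, so the reducibility lemmas must suppress essentially every sparse micro-structure that a discharging rule cannot afford. Complicating matters, the asymmetry between $V_1$ (at most one same-class neighbor) and $V_2$ (at most three) makes the extension-plus-swap arguments delicate: a saturated boundary vertex from $V_1$ propagates different constraints than one from $V_2$, and an attempted swap of such a vertex may cascade into a conflict at a distance-$2$ neighbor. I expect the proof to need several layers of reducibility lemmas, indexed by the saturation pattern along short paths of $2$- and $3$-vertices, and a carefully orchestrated two-stage set of discharging rules before the bookkeeping finally closes.
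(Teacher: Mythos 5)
Your outline sits in the right general family (minimal counterexample, reducible configurations, discharging), but it has a concrete logical gap at the very last step. From the hypothesis you correctly get $\sum_v \mu(v)=2|E(G)|-\tfrac{28}{9}|V(G)|\le \tfrac{10}{9}$, which is \emph{positive}. Showing that every vertex has nonnegative final charge then yields total charge $\ge 0$, which is perfectly consistent with the bound $\tfrac{10}{9}$; "any strict surplus left anywhere" only contradicts it if that surplus exceeds $\tfrac{10}{9}$, which you have no mechanism to guarantee. This is not a cosmetic issue: the additive constant $+\tfrac{5}{9}$ in the hypothesis is invisible to whole-graph discharging on a vertex-minimum counterexample. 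The paper handles it by first reducing to $(1,3)$-\emph{critical} graphs and proving the density lower bound $14|V(G)|-9|E(G)|\le -6$ for them (Theorem~\ref{th0critical}); Theorem~\ref{th0} then follows because a graph satisfying your hypothesis has $14|V(H)|-9|E(H)|\ge -5$ for every subgraph $H$, so it contains no critical subgraph. You would need at least this reformulation, and then a way to extract a total deficiency below $-5$, which the paper gets from subset potentials rather than from per-vertex nonnegativity.

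The second, larger gap is that the reducibility step — where essentially all of the work lies — is left entirely unspecified, and the tools you propose (delete a small set, extend, local swaps) are too weak at this density. The paper's argument is a potential-function argument in the Kostochka--Yancey style: it works with a generalized coloring in which each vertex carries a capacity vector $\bc(v)=(c_1(v),c_2(v))$, defines $\rho_{G,\bc}(A)=\sum_{v\in A}\rho(v)-9|E(G[A])|$, replaces an already-colored set $S$ by one or two gadget vertices $y_1,y_2$ with reduced capacities, and exploits submodularity of $\rho$ to combine several low-potential sets. Configurations such as a $(1,3)$-vertex of degree at most $6$ all of whose neighbors are degree-$2$ vertices of potential $14$ (Lemma~\ref{no136vtx}) are killed precisely by intersecting and uniting low-potential sets $U_1,\dots,U_6$ around the second neighbors $w_i$ — there is no evident swap-based substitute. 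So while your plan names the right genre, the mechanism that actually closes the argument (critical graphs plus subset potentials) is missing, and the discharging endgame as stated does not produce a contradiction.
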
 

We will prove a somewhat stronger statement  using critical graphs. A graph $G$ is {\em $(1,3)$-critical} if $G$ is not $(1,3)$-colorable, but every proper subgraph of $G$ is. 
Theorem~\ref{th0} follows from the following result on $(1,3)$-critical  graphs.

\begin{theorem}
 \label{th0critical}
 If $G$ is $(1,3)$-critical, then $14|V(G)| - 9|E(G)| \le -6$.
\end{theorem}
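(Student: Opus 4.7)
I argue by contradiction. Assume $G$ is a $(1,3)$-critical graph violating the conclusion, so that $9|E(G)| \le 14|V(G)| + 5$. I plan to use the standard ``reducible configurations plus discharging'' framework: first exploit criticality to forbid a list of local substructures in $G$, then assign each vertex an initial charge, redistribute it by a fixed set of local rules, and show that the resulting total is too large to be consistent with the sparsity hypothesis, yielding a contradiction.

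\textbf{Reducible configurations.} For any proper subgraph $H$ of $G$, criticality yields a $(1,3)$-coloring of $H$, and showing that such a coloring always extends to $G$ would contradict the non-$(1,3)$-colorability of $G$. Hence every local pattern in which an extension is always possible must be forbidden in $G$. The baseline reduction is $\delta(G) \ge 2$, since a vertex of degree at most $1$ trivially extends into either color class. For a $2$-vertex $v$ with neighbors $u_1,u_2$, a case analysis on the classes of $u_1,u_2$ in a $(1,3)$-coloring of $G-v$ pins down very restrictive possibilities for an obstruction to extension, and swapping the class of $u_1$ or $u_2$ propagates those constraints further into $G$. Similar, more elaborate reductions should handle vertices of degrees $3$, $4$, and $5$ and their joint local structure: in each case I would delete a small piece, invoke criticality to color what remains, and either extend directly or uncolor and reshuffle a bounded-size neighborhood. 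The target is to produce a library of forbidden configurations rich enough to drive the discharging.

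\textbf{Discharging.} Assign each vertex $v$ the initial charge $\mu(v) := 9d(v) - 28$, chosen so that the hypothesized violation forces
\[
\sum_{v \in V(G)} \mu(v) \;=\; 18|E(G)| - 28|V(G)| \;\le\; 10.
\]
A vertex of degree at least $4$ then starts with charge at least $8$ and plays the role of a donor, while $2$- and $3$-vertices start at $-10$ and $-1$ and must be subsidized by their neighborhoods. I would use transfer rules that send a fixed amount along each edge from the higher-degree to the lower-degree endpoint, with adjustments for specific local structures (for example, a $2$-vertex adjacent to a $3$-vertex, or a high-degree vertex with many low-degree neighbors). The reducibility lemmas bound how densely low-degree vertices can cluster around a given high-degree vertex; this is precisely what should allow the transfer constants to be chosen so that every $\mu^*(v) \ge 0$ and the total final charge strictly exceeds $10$, contradicting the display above.

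\textbf{Main obstacle.} The principal difficulty is near-tightness: the bound $mad(G) < 28/9 \approx 3.11$ leaves essentially no slack in the discharging, so both the transfer constants and the list of reducible configurations must be tuned delicately and must cover every local pattern around a $2$- or $3$-vertex---in particular, clusters of two or more low-degree vertices and subtle interactions between a $2$-vertex's second neighborhood and the color options available to its two neighbors. I expect these structural reductions, together with verifying their compatibility with the discharging rules, to be the most technically demanding part of the argument. Extracting the extra ``$+6$'' on top of the $mad$-style bound $28/9$ should come from a small, fixed amount of global slack at the end of the discharging: for instance, showing that at least one vertex, edge, or small subgraph forced to exist by criticality contributes a strictly positive surplus of bounded size to the final total.
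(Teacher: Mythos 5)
What you have written is a plan, not a proof: every load-bearing step is deferred (``similar, more elaborate reductions should handle\ldots'', ``I would use transfer rules\ldots'', ``the transfer constants must be tuned\ldots''). No reducible configuration beyond $\delta(G)\ge 2$ is actually established, no discharging rule is specified, and the source of the final contradiction is left as a hope that ``some small subgraph contributes a strictly positive surplus.'' As it stands there is nothing to verify, so the proposal cannot be accepted as a proof of the theorem.

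More importantly, the strategy you describe --- a finite library of \emph{local} reducible configurations in the $(1,3)$-critical graph itself, plus discharging that makes every final charge nonnegative --- is not the route the paper takes, and the near-tightness you correctly identify is precisely why. The paper instead strengthens the statement to a technical theorem about $\bc$-critical graphs, where each vertex carries its own capacity pair $\bc(v)=(c_1(v),c_2(v))$, and introduces the potential $\rho_{G,\bc}(A)=\sum_{v\in A}\rho(v)-9|E(G[A])|$ of an arbitrary vertex subset $A$. The key reductions are then \emph{global}: one colors $G[S]$ for a low-potential subset $S$ of any size, collapses $S$ to one or two new vertices with reduced capacities, and applies induction to the smaller pair $(G^\phi,\bc^\phi)$; this is what yields the lemmas that every proper nonempty subset has potential at least $-1$ and every nontrivial subset at least $1$, which in turn drive the structural lemmas of Sections 4--5. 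The per-vertex capacities are essential here (deleting an edge or a vertex is compensated by lowering a neighbor's capacity), and they are exactly what your framework lacks: without them, swapping the class of $u_1$ or $u_2$ around a $2$-vertex does not terminate in a bounded configuration. Finally, the paper's discharging does not aim to make all charges nonnegative; it shows every new charge is \emph{at most} $0$, hence $\rho(V(G))\le 0$, and the contradiction comes from a separate combinatorial argument on an auxiliary multigraph whose ``quasi-edges'' are the degree-$2$ top vertices, using a maximum-weight $2$-coloring and an Eulerian-type orientation of the conflicting quasi-edges to produce a $\bc$-coloring of $G$. You would need to supply all of this machinery (or a genuine substitute) before the argument could be called a proof.
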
 

Note that the bound of Theorem~\ref{th0critical} is greater than the corresponding (exact) result for  $(1,3)$-DP-coloring: there are infinitely many $(\frac{3}{2},\frac{3}{2})$-sparse graphs critical with respect to $(1,3)$-DP-coloring, see~\cite{KX2}).

\section{Setup of the proof and outline of the paper}

Our proof will use induction.
For easier induction steps,
 we will prove a stronger and more technical result.
Assume $G$ is a graph and $\bc: V(G) \to \mathbb{Z}^2$  is a map that assigns to each vertex $v$ of $G$ a pair of integers ${\bf c}(v)=(c_1(v), c_2(v))$ such that $-1 \le c_1(v) \le 1$ and $-1 \le c_2(v) \le 3$.  A \emph{${\bf c}$-coloring}  of $G$ is a mapping $\phi: V(G) \to \{1,2\}$ such that for each vertex $v$, if $\phi(v)=i$ and $V_i = \phi^{-1}(i)$, then 
$d_{G[V_i]}(v) \le c_i(v)$. In particular, if $c_i(v)=-1$, then $\phi(v) \ne i$.

We call $G$ \emph{${\bf c}$-colorable} if it admits a {${\bf c}$-coloring} and \emph{${\bf c}$-critical}, if it does not, but each its proper subgraph does. 

The ${\bf c}$-{\em potential} of 
a vertex $v$ in a graph $G$ is  $\rho_{G,{\bf c}}(v)=1+4c_1(v)+3c_2(v)$. For a subset $A$ of $V(G)$, let
\begin{equation}\label{pote}
  \rho_{G,{\bf c}}(A)=\sum_{v\in A}\rho_{G,{\bf c}}(v)-9|E(G[A])|,  
\end{equation}
  and let $$
\rho(G, {\bf c}) = \min\{\rho_{G, \bc}(A): A \subset V(G)\}.
$$

If the pair $(G, \bc)$ is clear from the context, we may write $\rho(v)$ for $\rho_{G, \bc}(v)$ and $\rho(A)$ for $\rho_{G, \bc}(A)$. Also in this case we say {\em potential} instead of ${\bf c}$-{\em potential}.
We call a vertex $v$ an $(a,b)$-vertex if $c_1(v) = a, c_2(v) = b$.
A
vertex of degree $2$ and potential $14$ is called a {\em  top  vertex}. All other vertices are {\em normal}. Let $T=T(G,{\bf c})$ denote the set of  top  vertices in $(G,{\bf c})$.
Then Theorem \ref{th0critical} follows from the following theorem.

\begin{theorem}\label{th1}
    If $G$ is ${\bf c}$-critical, then $\rho(G, \bc)\leq -6$.
\end{theorem}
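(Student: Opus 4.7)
My plan is to argue by contradiction: assume $G$ is $\bc$-critical with $\rho(G,\bc) \ge -5$, so that $\rho(A) \ge -5$ for every nonempty $A \subseteq V(G)$ and, in particular, $\rho(V(G)) \ge -5$. The proof then splits into two phases, a \emph{reducibility} phase and a \emph{discharging} phase.

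In the reducibility phase, I would prove that certain local configurations are forbidden in $G$. The generic scheme is to pick a subset $S \subseteq V(G)$, remove it, and produce a reduced list $\bc'$ on $V(G)\setminus S$ by decreasing $c_i(v)$, for each $v \in V(G)\setminus S$ adjacent to $S$, by the number of neighbors of $v$ in $S$ we intend to assign color $i$. By $\bc$-criticality, $G - S$ admits a $\bc'$-coloring, which we attempt to extend to $S$ by a greedy or swap argument. If such an extension is always possible, $S$ cannot occur in $G$. From this scheme I would extract: basic minimum-degree and $\bc$-value constraints on every vertex; the nonexistence of pairs (and short paths) of adjacent low-potential vertices; an upper bound on the number of top vertices in the neighborhood of any low-potential normal vertex; and lower bounds of the form $\rho(N[v]) \ge$ (some quantity) for every vertex $v$. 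These structural restrictions are the fuel for the discharging argument, and the assumed bound $\rho(A) \ge -5$ feeds back into reducibility by ruling out sets $S$ whose deletion would leave $\rho(V(G)\setminus S) < -5$.

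In the discharging phase, I would rewrite
\[
\rho(V(G)) \;=\; \sum_{v \in V(G)} \mu(v),\qquad \mu(v) \;=\; \rho(v) - \tfrac{9}{2}\deg(v),
\]
so that the assumption becomes $\sum_v \mu(v) \ge -5$. Note that top vertices have the comfortable initial charge $\mu = 14 - 9 = 5$, while low-potential vertices of small degree (e.g.\ $(0,0)$- or $(-1,3)$-vertices) are the chief holders of negative charge. I would design rules that move charge from normal vertices of large excess $\mu$ toward low-potential neighbors and toward endpoints of edges incident to top vertices (to amortize the ``$14 - 18 = -4$'' deficit that a top vertex would produce if it were of degree more than $2$). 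Using the forbidden configurations from Phase~1, I would bound each final charge $\mu^*(v)$ from above and sum to obtain $\sum_v \mu^*(v) < -5$, contradicting the assumed lower bound and completing the proof.

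The main obstacle is the reducibility phase. Extensions of defective colorings are not greedy: because the color-$1$ defect budget of any neighbor is at most $1$, whenever several vertices of $S$ want color $1$ at a common neighbor, one must recolor or swap along an alternating chain, and the case analysis must simultaneously track the color-$1$ and color-$2$ defect counts at every affected vertex. Handling top vertices is especially delicate: they have the maximal allowed $\bc$-values but the smallest effective per-edge contribution, and they are precisely the vertices one most wants to exclude from $S$, since their large potential inflates $\rho(V(G)\setminus S)$ and blocks the direct ``$\rho(V(G)\setminus S) < -5$'' contradiction. Assembling a reducibility list rich enough to drive the discharging down to the coefficient $14/9$, rather than a weaker value, is where the substantive work lies.
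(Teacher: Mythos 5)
Your proposal is an outline of the generic ``reducibility plus discharging'' template rather than a proof, and its two load-bearing steps do not match what is actually required. The reducibility phase cannot be run on the hypothesis $\rho(A)\ge -5$ alone. The paper's argument sets up a minimal counterexample (first minimizing the number of normal vertices, then maximizing $\rho_{G,\bc}(V(G))$) precisely so that modified pairs $(G',\bc')$ --- obtained by contracting a colored set $S$ into one or two gadget vertices $y_1,y_2$ with prescribed capacities, or by deleting an edge and lowering capacities at its ends --- are provably not counterexamples, hence are either colorable or contain a set of potential $\le -6$. Bootstrapping from this, one proves the much stronger gap statements that every proper nonempty subset has potential at least $-1$ (Corollary~\ref{geq-1}) and every nontrivial subset has potential at least $1$ (Lemma~\ref{nontrivial}); it is these bounds, not $\ge -5$, that make the later reductions close (they are invoked constantly in Sections~\ref{pot0}--\ref{dis}). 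Note also that criticality only yields $\bc$-colorability of proper subgraphs, not $\bc'$-colorability for a reduced list $\bc'$, so your step ``by $\bc$-criticality, $G-S$ admits a $\bc'$-coloring'' is not available; one must instead apply the theorem inductively to $(G-S,\bc')$ and handle the alternative that it contains a low-potential set.

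The endgame is also not viable as stated. With the same initial charge $\rho(v)-\tfrac92 d(v)$ and the rule that each top vertex sends $2.5$ to each neighbor, the paper only achieves $ch(v)\le 0$ for every vertex; no purely local discharging there produces a set of total new charge below $-5$, and your plan gives no reason to believe one exists. The actual contradiction is obtained by a global argument: form the quasi-edge multigraph $G_q$ by suppressing top vertices, choose a map $\psi$ maximizing a max-cut-type functional $S(\phi)$, use the charge bounds to show that at most one vertex can exceed its capacity under $\psi$ (two such vertices would force total charge at most $-6$), and then repair that single vertex by an Eulerian-type orientation of the conflicting quasi-edges together with a path to a vertex with slack, producing a genuine $\bc$-coloring of $G$ and contradicting criticality. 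Nothing in your sketch anticipates this step, and without it --- or a substitute for it --- the proof does not close.
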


The proof of Theorem \ref{th1} will be as follows.
We assume Theorem \ref{th1} is not true, and $(G, {\bf c})$ is a counterexample with 
\begin{enumerate}
\item[(A)] $|V(G)-T|$ minimum, 
\item[(B)] subject to (A), with $\rho_{G, \bc}(V(G))$ maximum.
\end{enumerate}

It is obvious that $G$ is connected and  $|V(G)| \ge 2$. 
In the next section we derive a series of properties of subsets of $V(G)$ with "low" potential. In particular,
in Corollary~\ref{geq-1} we show that every proper nonempty subset of $V(G)$ has potential at least $-1$.
In Section~\ref{pot0}, we study subsets of $V(G)$ with potential $-1$ and  $0$ and show that each such subset is either a singleton or is obtained from $V(G)$ by deleting a vertex of degree $2$. In Section~\ref{nohl}, we show that $G$ has no vertices of high potential that have low degree. In Section~\ref{dis}, we introduce and use one of our main tools, discharging. We use it to give an upper bound on $\rho_{G,{\bf c}}(V(G))$. The idea of discharging is the following. At the start, each vertex $v$ has charge
$h(v)=\rho_{G,{\bf c}}(v)-4.5d(v)$. By~\eqref{pote},
$\sum_{v\in V(G)}h(v)=\rho_{G,{\bf c}}(V(G))$. Then we change the charges of the vertices in such a way that their total sum does not change. In Section~\ref{dis}, we show that the new charge of each vertex will be non-positive, and in the final section we find a subset of vertices whose total new charge is less than $-5$. This will contradict the choice of $G$.


\section{Basic properties of minimum counter-examples}
Recall that the potential function is submodular, i.e. for every $A,B\subseteq V(G)$, 
 \begin{equation}\label{submodularity}
  \rho(A)+\rho(B)\geq \rho(A\cup B)+\rho(A\cap B).   
 \end{equation}

 Assume $S$ is a proper nonempty subset of $V(G)$ and $\phi$ is 
a $\bc$-coloring of $G[S]$. The  pair
$(G^{\phi}, \bc^{\phi})$ is defined as follows: 
For $i=1,2$, let $N_i = \{y \in V(G)-S: \phi^{-1}(i) \cap N(y) \ne \emptyset\}$.
As $G$ is connected, at least one of $N_1, N_2$ is non-empty. 
\begin{enumerate}
\item If $N_1=\emptyset$, then  
$G^{\phi}$ is obtained from $G$ by deleting $S$ and adding a vertex $y_2$ adjacent to every vertex in $N_2$, and $\bc^{\phi}$ is obtained from $\bc$ by letting $\bc^{\phi}(y_2)=(-1,0)$.
\item If $N_2=\emptyset$, then  
$G^{\phi}$ is obtained from $G$ by deleting $S$ and add a vertex $y_1$ adjacent to every vertex in $N_1$, and $\bc^{\phi}$ is obtained from $\bc$ by letting $\bc^{\phi}(y_1)=(0, -1)$.
\item If $N_1, N_2 \ne \emptyset$,  then  
$G^{\phi}$ is obtained from $G$ by deleting $S$ and add two adjacent vertices $y_1,y_2$ such that $y_i$ is adjacent to every vertex in $N_i$ for $i=1,2$,  and $\bc^{\phi}$ is obtained from $\bc$ by letting $\bc^{\phi}(y_2)=(-1,0)$ and $\bc^{\phi}(y_1)=(0,3)$.
\end{enumerate}
In each of the cases, we denote by $S^\phi$ the set of added vertices, i.e., $S^\phi=V(G^{\phi}) - V(G)$. 
We have $\rho_{G^{\phi}, \bc^{\phi}} (S^\phi) = -3$ in the first case, and $\rho_{G^{\phi}, \bc^{\phi}} (S^\phi) = -2$ otherwise.
\begin{lemma}
\label{noncolorable}
For any proper subset $S$ of $V(G)$ and any $\bc$-coloring $\phi$ of $G[S]$, $G^{\phi}$ is not $\bc^{\phi}$-colorable. Consequently if $|S| \ge 3$ or $|S|=2$ and $\rho_{G, \bc}(S) \le -3$, then $V(G^{\phi})$ has a subset $F$ with $\rho_{G^{\phi}, \bc^{\phi}}(F) \le -6$. Moreover, we can choose $F$ so that if $y_2\in S^\phi$, then $y_2\in F$ and if not then $y_1\in F$. 
\end{lemma}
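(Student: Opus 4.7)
The plan is to prove the two conclusions in turn and then obtain the ``moreover'' clause by a simple augmentation. For the non-colorability statement, I would argue by contradiction: if $\psi$ were a $\bc^\phi$-coloring of $G^\phi$, then $\Phi:=\phi\cup\psi|_{V(G)-S}$ would be a $\bc$-coloring of $G$, contradicting $\bc$-criticality. The labels $\bc^\phi(y_2)=(-1,0)$ (and, in case~3, $\bc^\phi(y_1)=(0,3)$ together with the edge $y_1y_2$) encode the interface exactly: $\psi(y_2)=2$ is forced with no color-$2$ neighbour, so in case~3 the edge $y_1y_2$ forces $\psi(y_1)=1$ and then every $v\in N_2$ must be colored $1$ and every $v\in N_1$ must be colored $2$. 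A vertex in $N_1\cap N_2$ already creates a clash ruling out $\psi$; otherwise for $v\in N_i\setminus N_{3-i}$ the $S$-neighbours of $v$ all lie in $\phi^{-1}(i)$ while $\psi(v)=3-i$, so extending $\psi$ by $\phi$ does not alter $v$'s color-degrees. Symmetrically, each $u\in S$ with $\phi(u)=i$ has every neighbour in $V(G)-S$ in $N_i$, all colored $3-i$ by $\psi$, so $u$'s color-degrees are unchanged too. Hence $\Phi$ is a valid $\bc$-coloring of $G$, the desired contradiction; cases~1 and~2 are analogous with only one $y_i$ present.

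\smallskip

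For the existence of $F$, since $G^\phi$ is not $\bc^\phi$-colorable it contains a $\bc^\phi$-critical subgraph $G^*$. I would aim to show that, under the stated hypothesis, $(G^*,\bc^\phi|_{V(G^*)})$ is a strictly smaller instance than $(G,\bc)$ in the lexicographic order~(A)--(B); then the minimality of $(G,\bc)$ forces Theorem~\ref{th1} to hold for $G^*$, producing $F^*\subseteq V(G^*)$ with $\rho_{G^*,\bc^\phi}(F^*)\le -6$, and since $G^*\subseteq G^\phi$ has only fewer edges we obtain $\rho_{G^\phi,\bc^\phi}(F^*)\le -6$. The size comparison splits along the hypothesis. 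If $|S|\ge 3$, then $|V(G^\phi)|\le |V(G)|-1$, and a careful bookkeeping (using $\rho(y_i)\ne 14$ so no added vertex is top, together with the tight balance of $S$-attachments vs.\ new edges to $y_i$ required for a top vertex of $G$ to remain top in $G^\phi$) yields $|V(G^*)-T(G^*,\bc^\phi)|<|V(G)-T(G,\bc)|$, invoking~(A). If $|S|=2$ and $\rho_{G,\bc}(S)\le -3$, I would compare via the identity
\[
\rho_{G^\phi,\bc^\phi}(V(G^\phi))-\rho_{G,\bc}(V(G))=\rho_{G^\phi,\bc^\phi}(S^\phi)-\rho_{G,\bc}(S)+9\bigl(e_G(S,V(G)-S)-|E_{G^\phi}(S^\phi,V(G)-S)|\bigr),
\]
with $\rho_{G^\phi,\bc^\phi}(S^\phi)\in\{-2,-3\}$ and $e_G(S,V(G)-S)\ge |E_{G^\phi}(S^\phi,V(G)-S)|$, forcing either a strict decrease of the non-top count (when $|S^\phi|=1$) or a strict increase of $\rho(V)$ (when $|S^\phi|=2$), contradicting~(A) or~(B).

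\smallskip

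Finally, for the ``moreover'' clause, if the produced $F$ omits the prescribed $y_i$ I augment by $y_i$: the prescribed $y_i$ is the one with $\bc^\phi$-coordinate $(-1,0)$ or $(0,-1)$, so $\rho_{G^\phi,\bc^\phi}(y_i)\in\{-2,-3\}$, and
\[
\rho_{G^\phi,\bc^\phi}(F\cup\{y_i\})=\rho_{G^\phi,\bc^\phi}(F)+\rho_{G^\phi,\bc^\phi}(y_i)-9|N_{G^\phi}(y_i)\cap F|\le -6+(-2)=-8,
\]
so $F\cup\{y_i\}$ still satisfies $\rho\le -6$ and now contains $y_i$. The main obstacle I anticipate is the top-vertex bookkeeping in the $|S|\ge 3$ case: one must rule out scenarios where many $(1,3)$-vertices of $V(G)-S$ are simultaneously promoted to degree-$2$ top vertices in $G^\phi$, offsetting the drop in $|V(G^\phi)|$. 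This should reduce to a combinatorial analysis of how the neighbourhoods $N_1$ and $N_2$ can pattern together with the surviving top vertices of $G$.
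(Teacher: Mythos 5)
Your proposal follows the paper's argument essentially step for step. Part one is the same union-of-colorings contradiction (your verification of how the labels $\bc^\phi(y_1),\bc^\phi(y_2)$ encode the interface is correct, though the paper gets the contradiction in one line from criticality of $G$), and part three is exactly the paper's augmentation, using $\rho_{G^\phi,\bc^\phi}(y_2)=-3$, resp.\ $\rho_{G^\phi,\bc^\phi}(y_1)=-2$, for the prescribed vertex. The only divergence is in part two, where you flag the top-vertex bookkeeping as an unresolved obstacle. The paper does not perform that bookkeeping: it observes only that $|S|\ge 3$ gives $|V(G^\phi)|<|V(G)|$, while $|S|=2$ with $\rho_{G,\bc}(S)\le -3$ gives either $|V(G^\phi)|<|V(G)|$ or $|V(G^\phi)|=|V(G)|$ together with $\rho_{G^\phi,\bc^\phi}(V(G^\phi))>\rho_{G,\bc}(V(G))$ (which is exactly your displayed identity, since $\rho_{G^\phi,\bc^\phi}(S^\phi)=-2>-3\ge\rho_{G,\bc}(S)$ and the edge term is nonnegative), and from this it concludes directly that $(G^\phi,\bc^\phi)$ is not a counterexample. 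In other words, the paper compares raw vertex counts and total potentials rather than the non-top counts of condition (A); your worry about normal vertices of $V(G)-S$ being promoted to top vertices of $G^\phi$ (or top vertices of $G$ in $S$ being replaced by the normal vertices $y_1,y_2$) is a legitimate reading of the measure as literally stated, but it is not resolved in the paper's proof either, so apart from that shared terseness your route and the paper's coincide.
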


\begin{proof}
If $\psi$ is a $\bc^{\phi}$-coloring of $G^{\phi}$, then the union of $\phi$ and the restriction of $\psi$ to $V(G)-S$ is a $\bc$-coloring of $G$, a contradiction.

If $|S| \ge 3$ or $|S| =2$ and $\rho_{G, \bc}(S) \le -3$, then either $|V(G^{\phi})|< |V(G)|$ or $|V(G^{\phi})|= |V(G)|$ and $\rho_{G^{\phi}, \bc^{\phi}}(V(G^{\phi}) )> \rho_{G, \bc}(V(G))$. Hence $(G^{\phi}, \bc^{\phi})$ is not a counterexample to Theorem \ref{th1}. So $V(G^{\phi})$ has a subset $F$ with $\rho_{G^{\phi}, \bc^{\phi}}(F) \le -6$. 

For the ``moreover" part, observe that if $y_2\in S^\phi-F$, then
$\rho_{G^{\phi}, \bc^{\phi}}(F\cup \{y_2\}) \le \rho_{G^{\phi}, \bc^{\phi}}(F) -3$, and if $y_2\notin S^\phi$ and $y_1 \notin F$, then $y_1\in S^\phi - F$ and
$\rho_{G^{\phi}, \bc^{\phi}}(F\cup \{y_1\}) \le \rho_{G^{\phi}, \bc^{\phi}}(F) -2$.
\end{proof}

\begin{lemma}\label{geq-2}
For every proper subset $S$ of $V(G)$, if $|S| \ge 2$, then $\rho_{G, \bc}(S)\geq -2$, and if  $|S| \ge 3$, then $\rho_{G, \bc}(S)\geq -1$. 
\end{lemma}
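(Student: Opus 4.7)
I plan to suppose for contradiction that the lemma fails. The strategy is to pick a $\bc$-coloring $\phi$ of $G[S]$ (which exists because $G$ is $\bc$-critical and $S$ is proper), apply Lemma~\ref{noncolorable} to obtain $F \subseteq V(G^\phi)$ with $\rho_{G^\phi}(F) \le -6$ (with $y_2 \in F$ if $y_2$ exists, else $y_1 \in F$, by the ``moreover'' clause), and pull back to $F' := (F \setminus S^\phi) \cup S \subseteq V(G)$. Splitting vertex potentials on $S$ vs.\ $S^\phi$ and noting that the number of edges from $F \setminus S^\phi$ to $F \cap S^\phi$ in $G^\phi$ is at most the number of edges from $F \setminus S^\phi$ to $S$ in $G$, I will get
\[
\rho_G(F') \;\le\; \rho_{G^\phi}(F) + \rho_G(S) - \rho_{G^\phi}(F \cap S^\phi),
\]
where $\rho_{G^\phi}(F \cap S^\phi)$ equals $-3$ when $F \cap S^\phi = \{y_2\}$ and equals $-2$ otherwise (i.e.\ when $y_1 \in F$).

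For the first statement ($|S| \ge 2$, $\rho(S) \le -3$), the inequality immediately gives $\rho_G(F') \le -6 + (-3) + 3 = -6$, contradicting the counterexample property $\rho(A) > -6$ for every $A \subseteq V(G)$. For the second statement ($|S| \ge 3$, $\rho(S) = -2$, the only case left by the first statement), I split on whether $y_1 \in F$: if yes, the inequality yields $\rho_G(F') \le -6$, done. Otherwise $F \cap S^\phi = \{y_2\}$ and I only get $\rho_G(F') \le -5$; if $F' \subsetneq V(G)$ then $|F'| \ge |S| \ge 3$ and the first statement forces $\rho(F') \ge -2$, a contradiction. So $F \supseteq V(G) \setminus S$. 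In the subcase where $y_1$ exists but is excluded from $F$ (so $F = V(G^\phi) \setminus \{y_1\}$), a direct computation yields $\rho_{G^\phi}(V(G^\phi) \setminus \{y_1\}) \ge \rho_G(V(G)) - 1 + 9|N_1| \ge 3$, contradicting $\rho_{G^\phi}(F) \le -6$ (using $\rho_G(V(G)) \ge -5$ and $|N_1| \ge 1$ in this subcase).

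The main obstacle will be the residual subcase in which $y_1$ does not exist at all --- i.e., $N_1 = \emptyset$, so $\phi$ must color every boundary vertex of $S$ by color $2$ --- and $F = V(G^\phi)$. Here everything is tight: $\rho_G(V(G)) = -5$ and $\rho_{G^\phi}(V(G^\phi)) = -6$, and no proper subset of $V(G^\phi)$ can witness $\rho \le -6$ (else pulling back would produce a proper subset of $V(G)$ with $\rho \le -5$, violating the first statement). To close this residual subcase I expect to exploit the extremality condition~(B) on $(G, \bc)$, combined with a structural argument showing that every $\bc$-coloring of $G[S]$ must place the boundary of $S$ in color $2$ (equivalently, $c_1(u) = -1$ for each such boundary vertex $u$); the rigidity this imposes on boundary $\bc$-values should yield the final contradiction.
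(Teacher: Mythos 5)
Your handling of the first statement, and of the second statement up to the point where $N_1=\emptyset$, is correct and essentially the paper's own argument: the pull-back inequality $\rho_G(F') \le \rho_{G^\phi}(F) + \rho_G(S) - \rho_{G^\phi}(F\cap S^\phi)$ is exactly the paper's (\ref{l51})--(\ref{l52}), and your disposal of the subcase ``$y_1$ exists but $y_1\notin F$'' by computing $\rho_{G^\phi}(V(G^\phi)\setminus\{y_1\})\ge \rho_G(V(G))-1+9|N_1|\ge 3$ is a clean variant of the paper's move (which instead adjoins $y_1$ to $F$ and uses that $y_1$ is then adjacent to $y_2$ and to all of $N_1\subseteq F$). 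Both are fine.

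The residual subcase, however, is a genuine gap, and it is where the paper spends well over half of the proof. Two concrete problems. First, your parenthetical equivalence is false: the hypothesis is that \emph{every} $\bc$-coloring of $G[S]$ assigns color $2$ to every vertex of $S$ with an outside neighbor, and this does not imply $c_1(u)=-1$ for those vertices --- the forcing can be global to $G[S]$ rather than encoded in $\bc(u)$ --- so no ``rigidity of boundary $\bc$-values'' is available, and the paper neither claims nor uses any such thing. Second, even with the correct hypothesis in hand, no contradiction is actually extracted. What the paper does is: (i) for each external neighbor $y$ of $S$, $\rho_{G,\bc}(S+y)\ge -5$ forces $\rho_\bc(y)\ge 6$, hence $c_2(y)\ge 1$; (ii) the case $\bc(y)=(-1,3)$ is eliminated by a separate vertex-deletion argument (there $S+y=V(G)$ and $d(y)=1$), so $c_1(y)\ge 0$; (iii) the decisive step is an \emph{edge}-deletion reduction: take $x\in S$ with external neighbor $y$, let $G'=G-xy$ with $c_2(x)$ and $c_2(y)$ each decremented; any $\bc'$-coloring of $G'$ restricts to a $\bc$-coloring of $G[S]$, hence colors $x$ with $2$ (this is exactly where ``$N_1=\emptyset$ for every coloring'' is used), hence is a $\bc$-coloring of $G$ --- a contradiction; so $(G',\bc')$, which has strictly larger total potential and is therefore not a counterexample by condition (B), contains a set $F$ with $\rho_{G',\bc'}(F)\le -6$, and a short case analysis on $F\cap\{x,y\}$ finishes. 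Your plan names condition (B) but does not identify the reduced instance to which it should be applied; without something like step (iii) the tight case $\rho_G(V(G))=-5$, $\rho_{G^\phi}(V(G^\phi))=-6$ cannot be closed.
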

\begin{proof}
Assume the lemma is not true.  Choose a proper subset $S$ of $V(G)$ 
contradicting the lemma with the minimum potential.
 Since $G$ is $\bc$-critical, $G[S]$ has a $\bc$-coloring $\phi$. By Lemma \ref{noncolorable}, there is a subset $F$ of $V(G^{\phi})$ such that $\rho_{G^{\phi}, \bc^{\phi}}(F) \le -6$.

 If $y_1,y_2\in F$, or $N_2 = \emptyset$ and thus $ S^\phi = \{y_1\}$, then
\begin{equation}\label{l51}
    \rho_{G, \bc}((F \setminus S^\phi)\cup S) \leq \rho_{G^{\phi},\bc^{\phi}}(F) -\rho_{G^{\phi}, \bc^{\phi}}(S^\phi)+\rho_{G, \bc}(S) \leq {-6}-(-2)+\rho_{G, \bc}(S)={-4}+\rho_{G, \bc}(S),
\end{equation}

If $F\cap S^\phi = \{y_2\}$, then 
\begin{equation}\label{l52}
    \rho_{G, \bc}(F \setminus\{y_2\})\cup S) \leq \rho_{G^{\phi},\bc^{\phi}}(F) -\rho_{G^{\phi}, \bc^{\phi}}(y_2)+\rho_{G, \bc}(S) \leq -6+3+\rho_{G, \bc}(S)=-3+\rho_{G, \bc}(S).
\end{equation}

\noindent
This proves the first part of the statement. We now may assume $|S|\geq 3$ and $\rho_{G, \bc}(S) \leq -2$. By~\eqref{l51} and~\eqref{l52}, we know that $\rho_{G, \bc}(S) = -2$ and $F\cap S^\phi = \{y_2\}$. 
Among all such sets, let $S$ be maximum in size. 

Suppose $N_1\neq \emptyset$. Then $\rho_{G^{\phi}, \bc^{\phi}}(F \cup \{y_1\}) \le -6+10-9= -5$.  
Therefore  $$\rho_{G, \bc}((F \setminus\{y_1, y_2\})\cup S) \leq \rho_{G^{\phi},\bc^{\phi}}(F \cup \{y_1\}) -\rho_{G^{\phi}, \bc^{\phi}}(\{y_1,y_2\})+\rho_{G, \bc}(S) \leq -3+\rho_{G, \bc}(S).$$
If $(F \setminus\{y_1, y_2\})\cup S  \neq V(G)$, then this contradicts the maximality of $|S|$. Otherwise, $N_1\subset F$, we then have   $\rho_{G^{\phi}, \bc^{\phi}}(F \cup \{y_1\}) \le  -5-9 = -14$  and
$\rho_{G, \bc}((F \setminus\{y_1, y_2\})\cup S) \leq -14 + 2 + \rho_{G, \bc}(S)$, again a contradiction.

\vspace{2mm}
We now may assume $N_1 = \emptyset$ for every $\bc$-coloring on $G[S]$. 
For every $y\in N(S)$,  $-5 \le \rho_{G,\bc}(S+y)\leq -2+\rho_{\bc}(y)-9$, and thus $\rho_{\bc}(y) \ge 6$. This implies that $c_2(y) \ge 1$.

If  $\bc(y) = (-1,3)$, then $\rho_{G,\bc}(S+y)=-5$.  By the choice of $S$,  $V(G) = S\cup\{y\}$  and  hence $d(y) = 1$.
Let $x$ be the neighbor of $y$.
Let  $\bc' = \bc$, except that   $c'_2(x) = \max\{-1, c_2(x)-1\}$.
T                                                                                                   hen $\rho(G-y, \bc')\geq -5$, and by the minimality of $(G,\bc)$, $G-y$ has a $\bc'$-coloring $\psi$, which extends to a $\bc$-coloring on $G$ by letting $\psi(y) = 2$, a contradiction.

Thus $c_1(y) \ge 0$. Let $x\in  S$ be a vertex which 
has a neighbor $y \in V(G)-S$.
Let 
 $\bc'$ be obtained  from $\bc$ by letting $c'_2(x) = \max\{-1, c_2(x)-1\}$,
 and $c'_2(y) =  c_2(y)-1$.
Let $G' = G-xy$. 
If $G'$ has a $\bc'$-coloring $\psi$, then the restriction of $\psi$ to $G[S]$ is a $\bc$-coloring of $G[S]$. Hence $\psi(x) = 2$ (for otherwise $N_1 \ne \emptyset$ when we choose $\phi$ be the restriction of $\psi$ to $S$). This implies that $\psi$ is 
also a $\bc$-coloring of $G$, a contradiction.

Thus $G'$ has no $\bc'$-coloring. By the minimality of $(G, \bc)$, $(G', \bc')$ is not a counterexample to Theorem \ref{th1}.
Therefore $\rho(G', \bc') \le -6$. Let $F$ be a subset of $V(G')$ with $\rho_{G', \bc'}(F) \le -6$. 

If 
 $F\cap \{x,y\} = \emptyset$, or $\{x,y\} \subseteq F$, then $\rho_{G, \bc}(F) \le \rho_{G',\bc'}(F) \le -6$, a contradiction.
 Assume $|F \cap \{x,y\}|=1$. Then $|F| \ge 2$ and $F$ is a proper subset of $V(G')=V(G)$, and 
$\rho_{G, \bc}(F) \le \rho_{G',\bc'}(F) + 3 \le -3$, contradicting 
the first part of the statement.
\end{proof}

\begin{lemma}\label{no-3vtx}
$G$ has  no $(-1,0)$-vertices.
\end{lemma}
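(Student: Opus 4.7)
Suppose for contradiction that $v$ is a $(-1,0)$-vertex. The plan is to carry out a delete-and-force reduction: remove $v$, constrain every $u\in N(v)$ to color $1$ via a new map $\bc^*$, invoke the minimality of $(G,\bc)$ on the smaller pair $(G-v,\bc^*)$, and translate any low-potential witness back to contradict Lemma~\ref{geq-2}.

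First I would fix the local structure around $v$. Since $c_1(v)=-1$ and $c_2(v)=0$, every $\bc$-coloring of $G$ must have $\phi(v)=2$ and $\phi(u)=1$ for all $u\in N(v)$. The special case $|V(G)|=2$ can be handled directly, giving $\rho(V(G))\le -6$ and ruling $(G,\bc)$ out as a counterexample; so assume $|V(G)|\ge 3$. Then Lemma~\ref{geq-2} applied to $\{v,u\}$ yields $\rho(\{v,u\})=-12+\rho(u)\ge -2$, so $\rho(u)\ge 10$ for each $u\in N(v)$; enumerating the integer solutions of $1+4c_1(u)+3c_2(u)\ge 10$ forces $(c_1(u),c_2(u))\in\{(1,2),(1,3),(0,3)\}$. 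In particular $c_1(u)\ge 0$ and $c_2(u)\in\{2,3\}$ throughout $N(v)$.

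Next, define $\bc^*$ on $V(G)-v$ by $\bc^*(u)=(c_1(u),-1)$ for $u\in N(v)$ and $\bc^*(x)=\bc(x)$ otherwise. Any $\bc^*$-coloring of $G-v$ sends $N(v)$ to color $1$ and extends via $\phi(v)=2$ to a $\bc$-coloring of $G$, so $(G-v,\bc^*)$ cannot be $\bc^*$-colorable. For each $u\in N(v)$, $\rho_{\bc^*}(u)=4c_1(u)-2\le 2$, so no neighbor of $v$ is a top vertex in $(G-v,\bc^*)$, giving $T(G-v,\bc^*)=T(G,\bc)\setminus N(v)$. When no top vertex lies in $N(v)$, the pair $(G-v,\bc^*)$ has strictly fewer non-top vertices than $(G,\bc)$, and by minimality some $F\subseteq V(G)-v$ satisfies $\rho_{G-v,\bc^*}(F)\le -6$. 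The hard case will be when some top $u^*\in N(v)$ exists (forcing $\bc(u^*)=(1,3)$ and $d(u^*)=2$); I would dispatch this by instead deleting $\{v,u^*\}$ together, keeping $\bc^*$ on the remaining neighbors of $v$, and decrementing $c_1$ by $1$ at the other neighbor $w$ of $u^*$ to absorb $u^*$ becoming color $1$ in the extension. An analogous counting shows this doubly-deleted pair still precedes $(G,\bc)$.

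Finally I would translate $F$ back to $V(G)$ using $\rho_{\bc}(u)-\rho_{\bc^*}(u)=3(c_2(u)+1)$ for $u\in N(v)$ together with the $9|F\cap N(v)|$ edge contribution from $v$:
\[
\rho_{G,\bc}(F\cup\{v\}) \;=\; \rho_{G-v,\bc^*}(F) + 3b - 3 \;\le\; 3b - 9,
\]
where $b=|\{u\in F\cap N(v):c_2(u)=3\}|$. For $b\le 1$ the right side is $\le -6$, contradicting either $\rho(V(G))\ge -5$ (if $F\cup\{v\}=V(G)$) or Lemma~\ref{geq-2} (if $F\cup\{v\}$ is a proper subset of size at least $2$). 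The main technical obstacle lies in forcing $b$ to be small: I would pick $F$ of minimum $\rho_{G-v,\bc^*}$-potential and observe that a $(1,3)$-neighbor $u\in F\cap N(v)$ has $\rho_{\bc^*}(u)=2>0$, so removing such an isolated $u$ from $F$ strictly lowers the potential, contradicting minimality. Combining this with the independence of $N(v)$ (another consequence of Lemma~\ref{geq-2} applied to $\{v,u_1,u_2\}$) and a submodular exchange argument reduces to the favorable regime $b\le 1$ and closes the proof.
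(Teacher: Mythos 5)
Your opening reductions are sound: the forced colors around $v$, the bound $\rho_{G,\bc}(u)\ge 10$ for $u\in N(v)$ obtained from Lemma~\ref{geq-2} applied to $\{v,u\}$, the fact that $(G-v,\bc^*)$ cannot be $\bc^*$-colorable, and the translation identity $\rho_{G,\bc}(F\cup\{v\})=\rho_{G-v,\bc^*}(F)+3b-3$ all check out. The genuine gap is the step where you need $b\le 1$. Setting $c_2^*(u)=-1$ costs $3(c_2(u)+1)=12$ units of potential at every neighbor with $c_2(u)=3$, while restoring the edge $uv$ recovers only $9$; so every such neighbor lying in the witness set $F$ is a net loss of $3$, and nothing in your argument bounds how many there are (e.g.\ $F$ could be all of $V(G)-v$). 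Your proposed mechanisms do not close this: deleting an ``isolated'' vertex of $F$ applies only to a $(1,3)$-neighbor with no neighbor inside $F$ --- a $(0,3)$-neighbor has $\rho_{G-v,\bc^*}(u)=-2<0$, and a neighbor with even one edge into $F$ raises the potential by at least $7$ when removed --- while independence of $N(v)$ does not limit $|F\cap N(v)|$, and the ``submodular exchange'' is never specified; I do not see what it would exchange. A secondary unfinished point: if $v$ has two or more top neighbors, then both $(G-v,\bc^*)$ and your modified pair with one top neighbor deleted can have at least as many normal vertices as $(G,\bc)$, so minimality under (A)--(B) is not available as claimed.

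This is precisely the obstruction the paper's proof is built to avoid. Instead of forcing all of $N(v)$ simultaneously, it establishes local structure around a single neighbor $u$ of $v$ (no degree-two neighbor of $v$ with $c_1=1$; every neighbor of $u$ other than $v$ is a top vertex) using single-edge or single-vertex deletions whose potential cost per affected vertex is only $3$ or $4$, and then derives the final contradiction from one top vertex at distance two from $v$. To keep your global reduction you would need a fundamentally different way to pay for the $12$-unit drop at each forced neighbor, and I do not see one at this stage of the argument, where only Lemma~\ref{geq-2} is available.
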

\begin{proof}
Suppose $v$ is a $(-1,0)$-vertex in $G$.
\begin{claim}\label{clm1}
Vertex $v$ has no degree two neighbor $u$ with $c_1 (u)= 1$.
\end{claim}

{\em Proof of claim.} Suppose $v$ has such a neighbor $u$. Let $w$ be the other neighbor of $u$.
Let $G'=G-u$ and let $\bc'$ be obtained from $\bc$ by letting $c'_1(w) = c_1(w)-1$.
If $G'$ has a $\bc'$-coloring $\phi$, then  by letting $\phi(u) = 1$ we get a $\bc$-coloring of $G$, a contradiction.

Otherwise,
by the choice of $(G,\bc)$, $G'$ contains a subset with potential at most $-6$. Let
  $S\subset V(G')$ be such a subset of maximum size.
If $w\notin S$, then $\rho_{(G, \bc)}(S)=\rho_{G', \bc'}(S)\leq -6$, and if $v\notin S$ then $\rho_{G', \bc'}(S+v)\leq \rho_{G', \bc'}(S)-3$. So, $v,w\in S$.
Therefore
$\rho_{G, \bc}(S+u)\leq \rho_{G', \bc'}(S) +\rho_{G, \bc}(u)+2\times (-9)+4\leq  -6+14-18+4 = -6$, a contradiction.
\qed

Now let $u$ be a neighbor of $v$.
\begin{claim}\label{cl1-1}
All  neighbors of $u$ apart from $v$ are  top  vertices.
\end{claim}
{\em Proof of claim.} Suppose $w$ is a 
normal neighbor of $u$ other than $v$. Let $G'=G-uw$ and let $\bc'$ be obtained from $\bc$ by letting $c'_1(w) = -1$. Then $\rho_{G',\bc'}(V(G')) > \rho_{G, \bc}(V(G))$.
Since $w$ is a normal vertex, by the choice of $(G, \bc)$, $(G', \bc')$ is not a counterexample to Theorem \ref{th1}.  

If $G'$ has a $\bc'$-coloring $\phi$, then  $\phi(w) = 2$, and $\phi(u) = 1$. Hence $\phi$ is also a $\bc$-coloring on $G$, a contradiction. Therefore,
 some $S\subset V(G')$ has $\rho_{G', \bc'}(S)\leq -6$.
 Choose a largest such  $S$.
 As in Claim~\ref{clm1}, $v,w\in S$.
If $u\in S$, then $\rho_{G, \bc}(S)\leq \rho_{G',\bc'}(S)+4\times 2 +(-9) \leq  -6+8-9$, a contradiction.
Thus $u\notin S$. 
If $S+u\neq V(G)$,  then  $$\rho_{G, \bc}(S+u)\leq
\rho_{G', \bc'}(S)+\rho(u)+2\times (-9)+8\leq
-6+14-18+8 = -2,$$ contradicting Lemma~\ref{geq-2}.
Assume $S+u = V(G)$. By Claim~\ref{clm1}, either $d_G(u) \ge 3$, or 
$d_G(u)=2$ and $c_1(u) \le 0$. Hence $$\rho_{G, \bc}(S+u)\leq 
\max\{\rho_{G', \bc'}(S)+14+3\times (-9)+8, \rho_{G', \bc'}(S)+10+2\times (-9)+8  \}=-6,$$ a contradiction.
\qed

Let $w$ be a  top  neighbor of $u$, and $z$ be the other neighbor of $w$. By Claim~\ref{clm1}, we may assume $z\neq v$.
 Let 
$G'=G-w$ and $\bc'$ be obtained from $\bc$ by letting $c'_2(z) = c_2(z)-1$. Then $(G', \bc')$ is not a counterexample to Theorem \ref{th1}. 

If $G'$ has a $\bc'$-coloring $\phi$, then  $\phi(u)=1$. So $\phi$ can be extended to a $\bc$-coloring of $G$ by letting $\phi(w)=2$, a contradiction.
Hence some $S\subset V(G')$ has $\rho_{G', \bc'}(S)\leq -6$. Let $S$ be such a set of maximum size.
Then $ v,z \in S$. If $u\in S$, 
 $\rho_{G, \bc}(S+w)\leq \rho_{G', \bc'}(S) + 3 +\rho_{G, \bc}(w)+2 \times (-9) \leq  -6+3+14-18=-7$, a contradiction.
Assume now $u\notin S$. If $u+w+S= V(G)$, then by Claim~\ref{clm1},
$\rho_{G, \bc}(S+u+w)\leq
\max\{ \rho_{G', \bc'}(S) +3 +2\times 14 +4 \times (-9), \rho_{G', \bc'}(S) +3 +14+10+3\times (-9) \}
 = -6$, a contradiction.
If $u+w+S\neq V(G)$, then $\rho_{G, \bc}(S+u+w)\leq \rho_{G', \bc'}(S) + 3 + 2\times 14+3\times (-9)\leq  -2$, contradicting Lemma~\ref{geq-2}.  This completes the proof of Lemma \ref{no-3vtx}.
\end{proof}

\begin{lemma}\label{no-2vtx}
$G$ has  no $(0,-1)$-vertices.
\end{lemma}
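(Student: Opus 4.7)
The plan is to parallel the proof of Lemma~\ref{no-3vtx}, swapping the roles of colors $1$ and $2$ and absorbing the asymmetry between the allowed ranges of $c_1$ and $c_2$. Suppose, for contradiction, that $G$ contains a $(0,-1)$-vertex $v$. In every $\bc$-coloring of $G$ two forcings hold: since $c_2(v)=-1$ we must have $\phi(v)=1$, and since $c_1(v)=0$ every neighbor $u$ of $v$ must satisfy $\phi(u)=2$. These are the dual statements of the forcings driving Lemma~\ref{no-3vtx}, so the same three-step outline applies.

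First I would establish the analogue of Claim~\ref{clm1}: $v$ has no degree-two neighbor $u$ with $c_2(u)=3$. Letting $w$ be the other neighbor of $u$, set $G'=G-u$ and define $\bc'$ by $c'_2(w)=c_2(w)-1$. Either $G'$ admits a $\bc'$-coloring which extends to a $\bc$-coloring of $G$ via $\phi(u)=2$, or some $S\subseteq V(G')$ has $\rho_{G',\bc'}(S)\le -6$. Choose such $S$ of maximum size. The standard observations (if $v\notin S$ then adding $v$ lowers $\rho_{G',\bc'}$ by at least $2$ since $\rho(v)=-2$; if $w\notin S$ then $\rho_{G,\bc}(S)=\rho_{G',\bc'}(S)\le -6$, contradicting Lemma~\ref{geq-2}) force $v,w\in S$, and then
$$
\rho_{G,\bc}(S+u)\le \rho_{G',\bc'}(S)+3+\rho_{G,\bc}(u)-18\le -6+3+14-18=-7,
$$
which violates Lemma~\ref{geq-2} when $S+u\ne V(G)$ and the choice of $(G,\bc)$ when $S+u=V(G)$.

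Next, fix a neighbor $u$ of $v$ and prove the analogue of Claim~\ref{cl1-1}: every neighbor of $u$ other than $v$ is a top vertex. If some non-top normal $w\ne v$ adjacent to $u$ existed, I would take $G'=G-uw$ with $c'_2(w)=-1$; a $\bc'$-coloring of $G'$ extends to $G$ because the forced $\phi(u)=2$ and $\phi(w)=1$ make the re-inserted edge $uw$ bichromatic, and otherwise the standard maximum-size analysis on the bad set $S$ together with casework on whether $u\in S$ and whether $S+u=V(G)$ produces the contradiction. Finally, let $w$ be a top neighbor of $u$ with other neighbor $z$; by Step~1, $z\ne v$. Taking $G'=G-w$ and $c'_1(z)=\max\{-1,c_1(z)-1\}$, any $\bc'$-coloring of $G'$ extends by $\phi(w)=1$ (the forced $\phi(u)=2$ makes $uw$ bichromatic, $c_1(w)=1$ absorbs $z$ if $\phi(z)=1$, and $z$'s color-$1$ budget has been pre-reserved); otherwise a low-potential $S$ contains $v$ and $z$, and
$$
\rho_{G,\bc}(S+w)\le \rho_{G',\bc'}(S)+4+14-18=-6,
$$
together with the parallel computation when $u\notin S$, closes the argument.

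The delicate point is Step~2. In Lemma~\ref{no-3vtx} the analogous calculation leans on $c_1\in[-1,1]$, so $\rho_{G,\bc}(w)-\rho_{G',\bc'}(w)\le 4\cdot 2=8$ and hence $\rho_{G,\bc}(S)\le -7$, a clean contradiction in every case. Here the wider range $c_2\in[-1,3]$ allows a jump of up to $3\cdot 4=12$, so the naive analogue only gives $\rho_{G,\bc}(S)\le -3$; this still contradicts Lemma~\ref{geq-2} for proper subsets of $V(G)$, but the borderline case $S=V(G)$ with $c_2(w)=3$ must be handled separately. I expect to deal with it by simultaneously decrementing $c_2(u)$ in $\bc'$, so that the extension remains valid when $\phi(w)=2$, which recovers an extra $+3$ in the vertex-potential accounting and pulls the final bound back below $-6$.
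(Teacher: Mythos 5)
Your proposal inverts where the difficulty of this pair of lemmas actually lies. For a $(0,-1)$-vertex $v$, every neighbor $u$ is forced into color $2$, and forcing a vertex into color $2$ is the \emph{cheap} operation: deleting the edge $uv$ and setting $c'_1(u)=-1$ changes the potential of $u$ by $4(c_1(u)+1)\le 8$, which is strictly less than the $9$ recovered from the deleted edge. The paper's entire proof is therefore three lines: take $G'=G-uv$ with $c'_1(u)=-1$; a $\bc'$-coloring of $G'$ has $\phi(u)=2$ and $\phi(v)=1$ and so is already a $\bc$-coloring of $G$; otherwise a maximum low-potential set $S$ must contain both $u$ and $v$, and then $\rho_{G,\bc}(S)\le \rho_{G',\bc'}(S)+8-9\le -7$, a contradiction whether $S$ is proper or all of $V(G)$. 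The elaborate three-step structure of Lemma~\ref{no-3vtx} is needed there precisely because a $(-1,0)$-vertex forces its neighbors into color $1$, and forcing into color $1$ (setting $c_2=-1$) can cost up to $3\cdot 4=12>9$. You correctly identified this $8$-versus-$12$ asymmetry in your last paragraph but applied it one level too deep, so you reproduce the hard proof in the setting where the one-move argument suffices.

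Beyond being a detour, your dualization does not close as written. In Step 2 the acknowledged $-3$ bound genuinely fails when $S=V(G)$, and the proposed repair (``simultaneously decrementing $c_2(u)$ \dots\ so that the extension remains valid when $\phi(w)=2$'') is incoherent as stated, since your $\bc'$ sets $c'_2(w)=-1$, which forbids $\phi(w)=2$ altogether. In Step 3 the recovery term from $z$ is $+4$ rather than the $+3$ of the original, and this leaks in two subcases: when $u\notin S$ and $S+u+w\ne V(G)$ you get $-6+4+2\cdot 14-3\cdot 9=-1$, which is consistent with Lemma~\ref{geq-2} and hence no contradiction; and when $S+u+w=V(G)$ with $d(u)=2$, your Claim~1 (which excludes only $c_2(u)=3$) still permits $\rho_{G,\bc}(u)=11$, giving $-6+4+14+11-27=-4$, not $\le -6$. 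Repairing this would require a strictly stronger Claim~1 and a reworked Step~2 --- none of which is needed once one sees the direct argument.
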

\begin{proof}
Suppose $v$ is a $(0,-1)$-vertex in $G$. Let $u$ be a neighbor of $v$. Let $G'=G-uv$ and let $\bc'$ be obtained from $\bc$ by letting $c'_1(u) = -1$. Then $(G', \bc')$ is not a counterexample to Theorem \ref{th1}. If $G'$ has a $\bc'$-coloring $\phi$, then $\phi(u) = 2$ and $\phi(v) = 1$. So, $\phi$ is also a $\bc$-coloring of $G$, a contradiction.
Hence some $S\subset V(G-uv)$ has $\rho_{c'}(S)\leq -6$. Let  $S$ be such a subset of  maximum size. Then $v,u\in S$. Therefore,
 $\rho_{G, \bc}(S)\leq \rho_{G', \bc'}(S)+8-9\leq -7$, a contradiction. 
\end{proof}

\begin{corollary}\label{geq-1}
For every $\emptyset\neq S\subsetneq V(G)$, $\rho_{G,\bc}(S)\geq -1$. 
\end{corollary}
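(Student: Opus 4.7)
The plan is to split according to $|S|$. The case $|S|\ge 3$ is immediate from Lemma~\ref{geq-2}, so I concentrate on $|S|\in\{1,2\}$.

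For $|S|=1$, writing $S=\{v\}$, a direct enumeration of the $15$ pairs $(c_1(v),c_2(v))\in\{-1,0,1\}\times\{-1,0,1,2,3\}$ shows that $\rho(v)<-1$ only when $\bc(v)\in\{(-1,-1),(-1,0),(0,-1)\}$. A $(-1,-1)$-vertex admits no color, so $G-u$ for any $u\neq v$ is also not $\bc$-colorable (using $|V(G)|\ge 2$), contradicting $\bc$-criticality. Lemmas~\ref{no-3vtx} and~\ref{no-2vtx} exclude the other two configurations. Hence $\rho(v)\ge 0$ for every $v\in V(G)$.

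For $|S|=2$, Lemma~\ref{geq-2} gives $\rho(S)\ge -2$, so I need to rule out $\rho(S)=-2$. Writing $S=\{u,v\}$: in the non-adjacent case $\rho(S)=\rho(u)+\rho(v)\ge 0$ by the single-vertex bound, so I may assume $uv\in E(G)$ and $\rho(u)+\rho(v)=7$. Enumerating the pairs of allowed configurations summing to $7$, the only possibilities are $\{(-1,1),(0,2)\}$, $\{(0,0),(-1,3)\}$, $\{(1,-1),(1,0)\}$, and $\{(-1,2),(0,1)\}$. For each, I would adapt the edge-deletion-with-potential-modification trick from the proofs of Lemmas~\ref{no-3vtx} and~\ref{no-2vtx}: set $G'=G-uv$ and define $\bc'$ by lowering $c_1$ or $c_2$ by $1$ at a suitable vertex of $S$, so that $\rho_{G',\bc'}(V(G'))>\rho_{G,\bc}(V(G))$, which (given the choice (B) of $(G,\bc)$) prevents $(G',\bc')$ from being a minimum counterexample. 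A direct check shows that every $\bc'$-coloring of $G'$ extends to a $\bc$-coloring of $G$, so $G'$ is not $\bc'$-colorable, and passing to a $\bc'$-critical subgraph (if needed) then yields $F\subseteq V(G')$ with $\rho_{G',\bc'}(F)\le -6$. Pulling $F$ back to $V(G)$ via the standard vertex-and-edge accounting produces a subset $F^{*}\subseteq V(G)$ whose potential strictly violates the corresponding lower bound: single-vertex positivity when $|F^{*}|=1$, Lemma~\ref{geq-2} when $|F^{*}|\in\{2,3,\ldots\}$, or the counterexample hypothesis $\rho_{G,\bc}(V(G))\ge -5$ when $F^{*}=V(G)$, giving the required contradiction.

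The main obstacle is choosing the $\bc$-modification in each of the four adjacent configurations so that three conditions hold simultaneously: (i) $\bc'$ does not introduce a $(-1,-1)$-vertex, (ii) every $\bc'$-coloring of $G'$ extends to $\bc$-color $G$, and (iii) the pullback adjustment always lands in the contradictory range regardless of whether $u$, $v$, both, or neither belong to $F$. Because the modifications are local and the potential shifts are bounded by $9$, the verification is routine, but it must be carried out separately for each of the four adjacent pairs.
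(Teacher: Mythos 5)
Your overall route is the same as the paper's: reduce to $|S|=2$ via Lemma~\ref{geq-2}, get single-vertex nonnegativity from Lemmas~\ref{no-3vtx} and~\ref{no-2vtx} (plus the trivial exclusion of $(-1,-1)$-vertices), force adjacency, enumerate the same four pairs with $\rho(u)+\rho(v)=7$, and in each case delete the edge $uv$, lower capacities, and pull back a low-potential set $F$. Two remarks on the execution. First, the capacity must be lowered at \emph{both} endpoints of $uv$ (the paper sets $c'_2(u)=c_2(u)-1$ and $c'_2(v)=c_2(v)-1$ in the first three cases, and lowers $c_1$ at both $u$ and $v$ in the fourth); lowering ``at a suitable vertex'' (one vertex) would break your condition (ii), since the unmodified endpoint could be at full capacity in $G-uv$ and overflow when the edge is restored.

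The genuine gap is in the pair $\{(1,0),(1,-1)\}$. There the modification is on $c_1$, whose coefficient in the potential is $4$, so when $|F\cap S|=1$ the pullback gives only $\rho_{G,\bc}(F)\le -6+4=-2$. This does \emph{not} ``land in the contradictory range'': for $|F|=2$ it exactly matches the bound of Lemma~\ref{geq-2}, and since the corollary you are proving is not yet available for $2$-element sets, you have no contradiction. The paper closes this case with an extra step you omit: combine $F$ with $S$ via submodularity,
$$\rho_{G,\bc}(F\cup S)\le \rho_{G,\bc}(F)+\rho_{G,\bc}(S)-\rho_{G,\bc}(F\cap S)\le -2-2-\min\{\rho(u),\rho(v)\}=-6,$$
using $\min\{\rho(u),\rho(v)\}=\rho((1,-1)\text{-vertex})=2$, which contradicts either Lemma~\ref{geq-2} or the hypothesis $\rho_{G,\bc}(V(G))\ge -5$. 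Without this (or some replacement, e.g.\ a different reduction for that configuration), your case analysis does not close.
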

\begin{proof}
    By Lemmas~\ref{no-3vtx} and \ref{no-2vtx}, every vertex has nonnegative potential.
    Suppose $S$ is a counterexample to the statement.
    By Lemma~\ref{geq-2}, we may assume $|S| = 2$ and $\rho_{G, \bc}(S) = -2$.
    Let $S = \{u,v\}$. Since both $u$ and $v$ have nonnegative potential, they are adjacent.
    By case analysis, there are only four possibilities for $\bc(u), \bc(v)$: either $\bc(u) = (-1, k)$ and $\bc(v) = (0, 3-k)$ for $k = 1,2,3$, or $\bc(u) = (1,0)$ and $\bc(v) = (1,-1)$.
    
    For the former cases, we form $\bc'$ from $\bc$ by letting $c'_2(u) = c_2(u)-1, c'_2(v) = c_2(v)-1$. 
    If $\phi$ is a $\bc'$-coloring of  $G-uv$, $\phi$ then is also a $\bc$-coloring on $G$, a contradiction. Therefore, by the choice of $G,\bc$, there is some $F\subset V(G-uv)$ with $\rho_{G-uv, \bc'}\leq -6$.
    We know that $F\cap S\neq \emptyset$.
    If $u,v\in F$, then $\rho_{G,\bc}(F)\leq \rho_{G-uv, \bc'}(F) - 9 + 6$;
    If $|F\cap S| = 1$, then $\rho_{G,\bc}(F)\leq \rho_{G-uv, \bc'}(F) +3 \leq -3$, both contradicting Lemma~\ref{geq-2}. 

    For the latter case, we form $\bc''$ from $\bc$ by letting $c''_1(u) = c_1(u)-1, c''_1(v) = c_1(v)-1$. We again may assume there is some $F\subset V(G-uv)$ with $\rho_{G-uv, \bc''}(F)\leq -6$. We may also assume that $|F\cap S| = 1$. Then we have $\rho_{G, \bc}(F)\leq -6+4 = -2$. By the submodularity of potential, we have
    $$\rho_{G,\bc}(F\cup S)\leq \rho_{G,\bc}(F) + \rho_{G,\bc}(S) - \rho_{G,\bc}(F\cap S) \leq -2-2-\min\{\rho_\bc(u), \rho_\bc(v) \} = -6, $$
    a contradiction.
\end{proof}
\section{Sets of potential $-1$ or $0$}\label{pot0}

{We say a set $ S\subseteq V(G)$ is \emph{trivial} if 
$|S|\leq 1$ or $S=V(G)$ or $S$  is obtained from   $V(G)$ by deleting a  top  vertex. Otherwise, $S$ is \emph{nontrivial}.}

Suppose that the minimum potential $j$ of a nontrivial subset of $V(G)$ is non-positive. By Corollary~\ref{geq-1},
$-1\leq j\leq 0$. Let $B$ be a largest nontrivial subset of $V(G)$ with $\rho_{G,{\bf c}}(B)=j$.

If a vertex
 $u\in V(G)\setminus B$ has at least two neighbors in $B$,
 then  $$\rho(B+u)\leq \rho(B)+\rho(u)-2\times 9 \leq 0+14-18=-4.$$ So, by Lemma~\ref{geq-2}, $B+u = V(G)$, and $u$ is a  top  vertex, thus  $B$ is trivial. Hence
\begin{equation}\label{onenei}
 \mbox{\em each $u\in V(G)\setminus B$ has at most one neighbor in $B$. }   
\end{equation}

\begin{lemma}\label{lem011}
Each vertex has positive potential.
\end{lemma}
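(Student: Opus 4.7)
Plan: By the previous lemmas of this section, every vertex satisfies $\rho(v)\ge -1$, and among the admissible pairs $(c_1,c_2)$ left unruled out by Lemmas~\ref{no-3vtx} and~\ref{no-2vtx} and Corollary~\ref{geq-1}, the only pair yielding $\rho(v)\le 0$ is $(-1,1)$ (with $\rho(v)=0$). So the task reduces to showing that $G$ has no $(-1,1)$-vertex. Suppose for contradiction that $v$ is a $(-1,1)$-vertex; since $G$ is connected and $|V(G)|\ge 2$, $v$ has at least one neighbor $u$.

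If $v$ has a non-top neighbor, choose $u$ to be such. Take $G'=G-uv$ with $\bc'$ agreeing with $\bc$ except $c'_2(v)=0$ and $c'_2(u)=\max\{-1,c_2(u)-1\}$. One checks that any $\bc'$-coloring $\phi$ of $G'$ extends to a $\bc$-coloring of $G$: $\phi(v)=2$ is forced, and the two decrements absorb exactly the re-added edge $uv$ whether $\phi(u)=1$ or $\phi(u)=2$. Hence $G'$ is not $\bc'$-colorable. Because $u$ is not top, a quick check (using that the maximum possible potential is $14$) shows that no vertex gains or loses top status under the modification, so $|V(G')-T(G',\bc')|=|V(G)-T(G,\bc)|$, while $\rho_{G',\bc'}(V(G'))$ strictly exceeds $\rho_{G,\bc}(V(G))$. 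By the choice of $(G,\bc)$ and criterion (B), $(G',\bc')$ is not a counterexample, so there is $S\subseteq V(G')$ with $\rho_{G',\bc'}(S)\le -6$. Translating back,
$$\rho_{G,\bc}(S)\le \rho_{G',\bc'}(S)+3\cdot\mathbf{1}[v\in S]+3\cdot\mathbf{1}[u\in S]-9\cdot\mathbf{1}[u,v\in S]\le -3,$$
and since $S$ is a proper nonempty subset of $V(G)$ (the case $S=V(G)$ is ruled out by $\rho(G,\bc)>-6$), this contradicts Corollary~\ref{geq-1}.

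The main obstacle is the remaining case in which every neighbor of $v$ is a top vertex (so each has $\bc=(1,3)$ and degree $2$); then any edge-deletion at such a neighbor strips its top status and sends $|V-T|$ the wrong way. For a top neighbor $u$ with other neighbor $w$, I would instead delete $u$ itself: with $G'=G-u$ and $\bc'=\bc$ except $c'_2(v)=0$, the flexibility of $\bc(u)=(1,3)$ lets any $\bc'$-coloring $\phi$ of $G'$ extend to $G$ by choosing $\phi(u)=2$ when $\phi(w)=1$ and $\phi(u)=1$ when $\phi(w)=2$. The $T$-bookkeeping splits on $w$: if $w$'s top status is unchanged, then $|V-T|$ is preserved and $\rho_{G',\bc'}(V(G'))$ strictly rises, so criterion (B) applies; if $w$ becomes top in $G'$ (precisely when $\bc(w)=(1,3)$ and $d_G(w)=3$) then $|V-T|$ strictly drops, so criterion (A) applies. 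Either way minimality produces $S\subseteq V(G')$ with $\rho_{G',\bc'}(S)\le -6$, and translation again contradicts Corollary~\ref{geq-1}. The genuinely delicate sub-case is when $w$ is itself top in $G$, so both $u$ and $w$ leave $T$; I would handle this by iterating deletion along the maximal chain of degree-$2$ top vertices starting at $u$, removing the whole chain at once with a single compensating $\bc'$-adjustment at its far endpoint (which is either a non-top vertex or closes the chain back through $v$). This chain-deletion argument is the step I expect to be the most technical.
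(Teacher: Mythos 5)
Your reduction to $(-1,1)$-vertices and your first case (some neighbor of $v$ is normal) essentially match the paper: delete the edge $uv$, decrement $c_2$ at both ends, and translate a low-potential set back via Corollary~\ref{geq-1}; that part is fine. The genuine gap is in the all-top-neighbours case. You delete a single top neighbour $u$, and your own case analysis shows the bookkeeping fails exactly when $u$'s other neighbour $w$ is itself top: then $u$ and $w$ both leave $T$ while only one vertex is deleted, so $|V-T|$ increases by one and neither criterion (A) nor (B) applies. Your proposed repair --- ``iterating deletion along the maximal chain of degree-$2$ top vertices'' with ``a single compensating $\bc'$-adjustment at its far endpoint'' --- is not carried out, and it is not routine: one must specify that adjustment, verify that a colouring of the reduced graph extends back along the entire chain while respecting $c_2(v)=1$, handle the possibility that the chain closes into a cycle through $v$ or ends at another top neighbour of $v$, and still land on the right side of the ordering (A)/(B). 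As submitted, this sub-case is unproved.

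The paper sidesteps the issue by deleting the whole closed neighbourhood $N[v]$ at once and colouring $v$ with $2$ and every top neighbour $u_i$ with $1$: since each top vertex is a degree-$2$ $(1,3)$-vertex, colouring $u_i$ with $1$ costs only one unit of $c_1$ at its second neighbour $w$, so one takes $G'=G-N[v]$ with $c'_1(w)=\max\{-1,\,c_1(w)-|N(w)\cap N(v)|\}$, and the potential accounting closes exactly ($+4$ per restored reduction against $14-2\cdot 9=-4$ per restored top vertex). No chain ever has to be traversed. Relatedly, the paper records just after this lemma that two top vertices are never adjacent in the minimal counterexample, which would make your delicate sub-case vacuous; but you neither prove nor invoke that fact, so it cannot rescue the argument as written.
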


{\bf Proof.} Suppose $\rho_{G,{\bf c}}(v)=h\leq 0$.
By Corollary~\ref{geq-1}, $h\geq -1$. 
By the definition of the potential of a vertex, 
\begin{equation}\label{B1}
\mbox{\em
 the potential of  $v$ cannot be $-1$, and if it is $0$, then  $\bc(v)=(-1,1)$. }
\end{equation}
Let $N(v)=\{u_1,\ldots,u_s\}$. If all $u_1,\ldots,u_s$ are  top  vertices, then let $G'=G-N[v]$ and let ${\bf c}'$ differ from ${\bf c}$ only in that for every vertex $w$ in $G'$, $c'_1(w)=\max\{-1, c_1(w)-|N(w) \cap N(v)|\}$.

If $G'$ has a ${\bf c}'$-coloring $\phi$, then we obtain from it  a ${\bf c}$-coloring of $G$ by letting $\phi(v)=2$ and $\phi(u_i)=1$ for all $1\leq i\leq s$. Otherwise, by the minimality of $G$, there exists $F \subseteq V(G')$ with
$\rho_{G',{\bf c}'}(F)\leq -6$. Assume $N_G(F) \cap N_G(v) =  \{ u_{i_1},u_{i_2},\ldots,u_{i_q}\}$. Then
$$\rho_{G,{\bf c}}(F \cup \{u_{i_1},u_{i_2},\ldots,u_{i_q},v\})\leq \rho_{G',{\bf c}'}(F)-q(-4)+q(-4)\leq -6,$$
a contradiction. 

Thus, $v$ has a normal neighbor, say $u$. Let $G'=G-vu$ and let ${\bf c}'$ differ from ${\bf c}$ only in that 
$c'_2(u)=c_2(u)-1$ and $c'_2(v)=c_2(v)-1$.  
If $G'$ has a ${\bf c}'$-coloring $\phi$, then $\phi$ is also a ${\bf c}$-coloring of $G$ (note that $\phi(v)=2$). Otherwise, by the minimality of $G$, there is $F \subseteq V(G')$ with
$\rho_{G',{\bf c}'}(F)\leq -6$. By Lemma~\ref{geq-2} and the definition of ${\bf c}'$, $F$ must contain both $u$ and $v$.
But then $\rho_{G,{\bf c}}(A)\leq \rho_{G',{\bf c}'}(A)-2(-3)-9\leq -9,$ a contradiction.\qed

\begin{lemma}\label{lem012}
If $B=\{v_1,v_2\}$, then $v_1v_2\in E(G)$
\end{lemma}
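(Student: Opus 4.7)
The plan is to argue by contradiction and use the strict positivity of vertex potentials already established in Lemma~\ref{lem011}. Suppose $B=\{v_1,v_2\}$ but $v_1v_2\notin E(G)$. Then $G[B]$ has no edges, so by the definition of potential in~\eqref{pote},
\[
\rho_{G,{\bf c}}(B)=\rho_{G,{\bf c}}(v_1)+\rho_{G,{\bf c}}(v_2).
\]

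Next I would observe that each single-vertex potential is an integer, since $\rho_{G,{\bf c}}(v)=1+4c_1(v)+3c_2(v)$ with $c_1(v),c_2(v)\in\mathbb{Z}$. Combined with Lemma~\ref{lem011}, which states that every vertex has strictly positive potential, this forces $\rho_{G,{\bf c}}(v_i)\geq 1$ for $i=1,2$. Therefore $\rho_{G,{\bf c}}(B)\geq 2$.

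However, by the choice of $B$ as a nontrivial subset with $\rho_{G,{\bf c}}(B)=j\in\{-1,0\}$, we have $\rho_{G,{\bf c}}(B)\leq 0$. This contradicts $\rho_{G,{\bf c}}(B)\geq 2$, and so $v_1v_2\in E(G)$, as claimed. There is no genuine obstacle here once Lemma~\ref{lem011} is in hand: the lemma is essentially a direct arithmetic consequence of the fact that two nonadjacent vertices contribute additively to the potential, and positive integers cannot sum to a nonpositive number.
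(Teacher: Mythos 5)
Your proof is correct and follows essentially the same route as the paper: since $v_1v_2\notin E(G)$ implies $\rho_{G,\bc}(B)=\rho_{G,\bc}(v_1)+\rho_{G,\bc}(v_2)$, and $\rho_{G,\bc}(B)=j\le 0$, at least one vertex would have nonpositive potential, contradicting Lemma~\ref{lem011}. The integrality remark is harmless but not needed.
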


{\bf Proof.}
 If $B=\{v_1,v_2\}$ and $v_1v_2\notin E(G)$, then some of $v_1,v_2$ must have a nonpositive potential, contradicting
 Lemma~\ref{lem011}.
  \qed

\begin{lemma}\label{lem02}
Suppose $v\in B$  has a neighbor $u$ outside of $B$. 
 For each ${\bf c}$-coloring $\phi$ of $G[B]$ and $i\in \{1,2\}$, if $\phi(v)=i$, then $v$ has $c_i(v)$ neighbors of color $i$ in $B$.
\end{lemma}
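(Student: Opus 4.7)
Suppose for contradiction that $\phi$ is a $\bc$-coloring of $G[B]$ with $\phi(v)=i$ and $v$ has $k<c_i(v)$ color-$i$ neighbors in $B$. My strategy is to extend $\phi$ to a $\bc$-coloring $\phi^+$ of $G[B\cup\{u\}]$ by using the slack at $v$, and then derive a contradiction by analyzing the position of $B\cup\{u\}$ in $V(G)$.

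To build $\phi^+$, I would exploit~\eqref{onenei}, which says that $u$'s unique neighbor in $B$ is $v$. If $c_i(u)\ge 1$, I set $\phi^+(u)=i$: then $v$'s color-$i$ degree becomes $k+1\le c_i(v)$ by the assumed slack, and $u$'s color-$i$ degree equals $1\le c_i(u)$. If instead $c_i(u)\le 0$, Lemmas~\ref{lem011},~\ref{no-2vtx}, and~\ref{no-3vtx} together force $c_{3-i}(u)\ge 0$ (they rule out every low-potential $\bc$-value), and I set $\phi^+(u)=3-i$; since $v$ carries the opposite color, $u$'s color-$(3-i)$ degree is $0\le c_{3-i}(u)$. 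Either way $\phi^+$ is a $\bc$-coloring of $G[B\cup\{u\}]$.

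Next I would split on where $B\cup\{u\}$ sits. If $B\cup\{u\}=V(G)$, then $\phi^+$ is a $\bc$-coloring of $G$, contradicting $\bc$-criticality. If $B\cup\{u\}=V(G)-t$ for a top vertex $t$, then~\eqref{onenei} forces $t\sim u$ and gives $t$ exactly one neighbor in $B$; using $\bc(t)=(1,3)$ and a short case analysis on the colors $\phi(b)$ and $\phi^+(u)$ of $t$'s two neighbors, one can always choose a color for $t$ that extends $\phi^+$ to $G$, again contradicting $\bc$-criticality. Otherwise $B\cup\{u\}$ is a nontrivial proper subset, and by~\eqref{onenei} the potential decomposes cleanly as $\rho(B\cup\{u\})=\rho(B)+\rho(u)-9$; if $\rho(u)\le 9-\rho(B)$, this is at most $0$ and contradicts the maximality of $B$.

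The main technical hurdle is the residual subcase $\rho(u)>9-\rho(B)$, where $\bc(u)\in\{(0,3),(1,2),(1,3)\}$ and the enlarged set carries positive potential. I expect to resolve it by applying the auxiliary construction of Lemma~\ref{noncolorable} to $(G^{\phi^+},\bc^{\phi^+})$ on $B\cup\{u\}$, extracting a low-potential subset $F$, and pulling it back to $V(G)$ via the same potential-adjustment identity used earlier, then invoking Corollary~\ref{geq-1} together with the counterexample bound $\rho(V(G))\ge -5$ for a contradiction. Alternatively, in the $\phi^+(u)=3-i$ branch the slack at $v$ is preserved in $\phi^+$, so the whole extension argument can be iterated to another outside neighbor of $v$. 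The careful management of these high-$\rho(u)$ configurations is where I expect the bulk of the work to lie.
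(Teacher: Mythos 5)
There is a genuine gap, and it sits exactly where you park ``the bulk of the work.'' Your plan is to color $u$ and then get a contradiction from the location or potential of $B\cup\{u\}$. But in the only hard case, $\rho(u)\in\{10,11,14\}$, the set $B\cup\{u\}$ has potential as large as $j+5$, and none of the tools available at this point (Corollary~\ref{geq-1}, Lemma~\ref{lem011}, the extremal choice of $B$) says anything about a nontrivial set of potential up to $5$. Your proposed repair --- apply Lemma~\ref{noncolorable} to $S=B\cup\{u\}$ with the coloring $\phi^+$ and pull back a set $F$ with $\rho_{G^{\phi^+},\bc^{\phi^+}}(F)\le -6$ --- only yields $\rho_{G,\bc}\bigl((F-S^{\phi^+})\cup S\bigr)\le -6+3+(j+5)=j+2$, which is not a contradiction. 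A telling symptom: in your $\phi^+(u)=3-i$ branch the slack at $v$ is never used, so if that branch closed with the stated tools it would prove outright that no vertex of $V(G)-B$ with small $c_i$ is adjacent to $B$ --- essentially Lemma~\ref{nonon-surplusnbg}, which the paper needs two more long lemmas to establish. Also note two smaller defects: the threshold should be $\rho(u)\le 9$ (so that $\rho(B\cup\{u\})\le j$, not merely $\le 0$; with $j=-1$ a value of $0$ contradicts nothing), and in the $B\cup\{u\}=V(G)-t$ case your ``short case analysis'' on $t$ can fail when both neighbors of $t$ are saturated in their own colors.

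The paper's proof uses the slack in a structurally different way: it never colors $u$. Starting from the reduced pair $(G^{\phi},\bc^{\phi})$, it \emph{deletes the edge $uy_i$} and charges one unit of $u$'s color-$i$ capacity ($c'_i(u)=c^{\phi}_i(u)-1$). A $\bc'$-coloring of this modified graph lifts to a $\bc$-coloring of $G$ precisely because $v$ can absorb one extra color-$i$ neighbor --- that is the only place the slack enters. The payoff is in the potential arithmetic: deleting the edge gains $+9$ while the capacity reduction costs only $3$ or $4$, so the pulled-back sets land at $\le -6$, $\le -3$, or $\le j$, each of which contradicts an available lemma or the choice of $B$. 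Your approach, by contrast, spends the $-9$ of the edge $uv$ against the full $\rho(u)$ and cannot close the books when $\rho(u)\ge 10$.
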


{\bf Proof.} Suppose, for some ${\bf c}$-coloring $\phi$ of $G[B]$ and $i\in \{1,2\}$, $\phi(v)=i$ and 
\begin{equation}\label{numn}
\mbox{\em the number of neighbors of $v$ of color $i$ in $B$ is less than $c_i(v)$.}
\end{equation}

Recall that $v$ is the only neighbor of $u$ inside $B$.
Let $(G', \bc')$ be obtained from $(G^{\phi}, \bc^{\phi})$
by deleting the edge $uy_i$ and let $\bc'$ differ $\bc^{\phi}$ only in that  $c'_i(u)= c^{\phi}_i(u)-1$.

By the choice of $(G, \bc)$, $(G', \bc')$ is not a counterexample to Theorem \ref{th1}. If $(G',{\bf c}')$ is ${\bf c}'$-colorable, then together with $\phi$ we get a   ${\bf c}$-coloring of $G$. Thus there is a subset $F$ of $V(G')$ such that $\rho_{G', \bc'}(F) \le -6$.
Let $F$ be such a subset of maximum size. 

Note that $\rho_{G',{\bf c}'}(B^\phi)\geq \rho_{G,{\bf c}}(B)-3$ and $\rho_{G',{\bf c}'}(u)\geq \rho_{G,{\bf c}}(u)-4$. If $B^\phi \cup \{u\} \subset F$,  then
$$\rho_{G,{\bf c}}((F-B^\phi)\cup B)\leq \rho_{G',{\bf c}'}(F)-(-3)+j+4-9\leq -6+j-2,$$
a contradiction.
If $u \notin F$, then $$\rho_{G,{\bf c}}((F-B^\phi)\cup B)\leq
 \rho_{G', \bc'}(F)+3 \le -3,$$
  contradicting Lemma \ref{geq-2}.
 If $F \cap B^\phi = \emptyset$,  then $$\rho_{G,{\bf c}}(F)\leq
 \rho_{G', \bc'}(F)+4 \le -2,$$
  contradicting Corollary \ref{geq-1}.  
 
Assume $u \in F$, $F \cap B^\phi \ne \emptyset$ and $B^\phi \not\subseteq F$. Thus $B^\phi=\{y_1,y_2\}$ and $F$ contains exactly one of $y_1$ and $y_2$. 
By the choice of $F$, we know that $y_2 \in F$ and $y_1 \notin F$.
But then 
$$\rho_{G,\bc}((F-B^\phi)\cup B) \le \rho_{G',\bc'}(F) + j+2+4-9 \le -9,$$
a contradiction. 
\qed

Lemma~\ref{lem02} essentially says that for every $\bc$-coloring $\phi$ on $G[B]$, every vertex on the boundary of $B$ uses all its capacity in $B$ with respect to $\phi$. The statement of Lemma~\ref{lem02} also holds for all proper subsets $F\subset V(G)$ with $\rho_{G,\bc}(F)\leq 2$ 

\begin{corollary}\label{cor-1}
For any vertex $v \in B$ 
 and any color $i \in \{1,2\}$, there is a $\bc$-coloring $\phi$ of $G[B]$ such that $\phi(v)=i$.
\end{corollary}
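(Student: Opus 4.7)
The plan is to argue by contradiction, extending the tightening-and-gadget idea used in the proofs of Lemmas~\ref{noncolorable} and~\ref{lem02}. Suppose for contradiction there is some $v\in B$ and some color $i\in\{1,2\}$ (necessarily with $c_i(v)\ge 0$) such that no $\bc$-coloring of $G[B]$ satisfies $\phi(v)=i$. By the symmetry between the two colors I would take $i=1$, so every $\bc$-coloring of $G[B]$ has $\phi(v)=2$.

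The first step is to tighten $\bc$ to a vector $\bc'$ that agrees with $\bc$ on $V(G)\setminus\{v\}$ and has $c'_1(v)=-1$. The hypothesis forces the $\bc$- and $\bc'$-colorings of $G[B]$ to coincide, so $G[B]$ remains $\bc'$-colorable, while $\bc'\le\bc$ coordinate-wise guarantees that $G$ itself stays $\bc'$-uncolorable. I would then fix any $\bc'$-coloring $\phi$ of $G[B]$ (necessarily with $\phi(v)=2$) and pass to the reduced pair $(G^\phi,(\bc')^\phi)$ from the construction preceding Lemma~\ref{noncolorable}; this pair is $(\bc')^\phi$-uncolorable.

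Next I would invoke the minimality of $(G,\bc)$ in the ordering~(A)-(B). Because $B$ is nontrivial and the gadget vertices $y_1,y_2$ have potentials $10$ and $-3$ (so neither is a top vertex), the pair $(G^\phi,(\bc')^\phi)$ is strictly smaller than $(G,\bc)$. Minimality therefore produces a set $F\subseteq V(G^\phi)$ with $\rho_{G^\phi,(\bc')^\phi}(F)\le-6$, and the ``moreover'' clause of Lemma~\ref{noncolorable} lets me demand $y_2\in F$ whenever $y_2$ is present, and $y_1\in F$ otherwise. Pulling back to $F'=(F\setminus B^\phi)\cup B\subseteq V(G)$ with the bookkeeping from the proof of Lemma~\ref{geq-2}, and combining $\rho_{G,\bc}(B)=j\in\{-1,0\}$ with the gadget-vertex potentials, I would obtain a bound of the form $\rho_{G,\bc}(F')\le j-3\le-3$, contradicting Corollary~\ref{geq-1} whenever $F'$ is a proper nonempty subset of $V(G)$.

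The hard part will be the corner case $F=V(G^\phi)$, i.e.\ $F'=V(G)$, in which the bound $\rho_{G,\bc}(V(G))\le-3$ is consistent with the counterexample inequality $\rho(G,\bc)>-6$. To handle it I would exploit the extra slack $4(c_1(v)+1)\ge 4$ forced by $c'_1(v)=-1$, which pushes $\rho_{G,\bc'}(V(G))$ down to at most $-7$, and combine this with~\eqref{onenei} (so each outside vertex contributes a single crossing edge to $B$) to remove a carefully chosen boundary vertex from $F'$, yielding a strictly smaller subset of $V(G)$ whose potential is still at most $-1$ and contradicting Corollary~\ref{geq-1}. I would also separately treat the easier subcase in which $v$ has no neighbor in $V(G)\setminus B$, where the reduction adds only one gadget vertex and the arithmetic simplifies accordingly.
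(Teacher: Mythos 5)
Your overall strategy --- forbid color $i$ at $v$ outright, contract $B$ via the gadget of Lemma~\ref{noncolorable}, and hunt for a low-potential set in $(G,\bc)$ --- is genuinely different from the paper's, and it breaks exactly at the point you flag as ``the hard part.'' When the extracted set $F$ contains all of $V(G^\phi)\setminus B^\phi$, the pulled-back set is $F'=V(G)$ and your computation yields only $\rho_{G,\bc}(V(G))\le j-3$, i.e.\ at best $\le -3$ (or $\le -4$ when $B^\phi\subseteq F$). This is perfectly consistent with $(G,\bc)$ being a counterexample, since a counterexample only satisfies $\rho_{G,\bc}(V(G))\ge -5$; there is nothing to contradict. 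The proposed rescue does not work: the ``extra slack'' $4(c_1(v)+1)$ created by setting $c'_1(v)=-1$ lowers only the $\bc'$-potential of $F'$, and a small value of $\rho_{G,\bc'}(V(G))$ contradicts nothing, because Corollary~\ref{geq-1} and the counterexample hypothesis constrain $\bc$-potentials only (note also that this slack never entered your estimate of $\rho_{G,\bc}(F')$, since you replaced $F\cap B^\phi$ by $B$ measured with $\bc$). Nor can you fix it by deleting a boundary vertex: $\rho_{G,\bc}(V(G)\setminus\{w\})=\rho_{G,\bc}(V(G))+9d(w)-\rho_{G,\bc}(w)$, and $9d(w)-\rho_{G,\bc}(w)\ge 4$ for every vertex of degree at least $2$, while a degree-one vertex with $\rho_{G,\bc}(w)\ge 7$ cannot exist in a critical graph (greedy extension); so no single deletion produces a proper subset of potential below $-1$. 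A secondary problem: the claim that $(G^\phi,(\bc')^\phi)$ is ``strictly smaller'' in the ordering (A)--(B) is unjustified when $B$ contains only two normal vertices, since the minimized quantity is $|V(G)-T|$, which is then unchanged by replacing $B$ with the two gadget vertices, while the total potential may drop because $\rho(S^\phi)=-2<j$.

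The paper's proof sidesteps all of this by never leaving $G[B]$ and by using Lemma~\ref{lem02} instead of the gadget. If $c_{3-i}(v)=-1$, any $\bc$-coloring of $G[B]$ already colors $v$ with $i$. Otherwise lower $c_{3-i}(v)$ by one to obtain $\bc'$. Every nonempty subset of $B$ has $\bc$-potential at least $j\ge -1$ (singletons have positive potential by Lemma~\ref{lem011}, and nontrivial subsets have potential at least $j$ by the choice of $B$), hence $\bc'$-potential at least $j-4\ge -5>-6$; so by the minimality of $(G,\bc)$, $G[B]$ has a $\bc'$-coloring $\phi$. If $\phi(v)=3-i$, then $\phi$ is in particular a $\bc$-coloring of $G[B]$, and Lemma~\ref{lem02} forces $v$ to have exactly $c_{3-i}(v)$ neighbors of color $3-i$ in $B$, contradicting the tightened capacity $c'_{3-i}(v)=c_{3-i}(v)-1$. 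Hence $\phi(v)=i$. To salvage your route you would need to import this saturation argument (or an equivalent device) to dispose of the case $F'=V(G)$; as written, the proposal does not close it.
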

\begin{proof} If $ c_{3-i}(v)=-1$, this holds because 
$G[B]$ has a ${\bf c}$-coloring by the minimality of $G$.

Suppose $ c_{3-i}(v)\geq 0$ and
 let weighting ${\bf c}'$ differ from ${\bf c}$ on $B$ only in that $c'_{3-i}(v)=c_{3-i}(v)-1$. Then for every nonempty $B'\subseteq B$, $\rho_{G[B'],{\bf c}'}(B)\geq j-4\ge -5$. By the minimality of
$(G,{\bf c})$, graph $G[B]$ has a ${\bf c}'$-coloring $\phi$. If $\phi(v)=3-i$, then by Lemma~\ref{lem02},
$v$ has $c_{3-i}(v)$ neighbors of color $3-i$,  contrary to the fact that $\phi$ is a $\bc'$-coloring of $G[B]$. Thus $\phi(v)=i$.
\end{proof}

Since  top  vertices are not adjacent to each other by the minimality of $G$, for each edge $uw$, at least one of $u,w$ is a normal vertex. 

\begin{lemma}\label{lem03}
Suppose $u\in V(G)-B-T$  has a neighbor $v$ in $B$. Then $u$ cannot have a neighbor $w$ in $V(G)-B-T$.
\end{lemma}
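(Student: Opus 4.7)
The plan is to assume for contradiction that $u\in V(G)-B-T$ has both the given neighbor $v\in B$ and a normal neighbor $w\in V(G)-B-T$, and to derive a contradiction by combining a carefully chosen $\bc$-coloring of $G[B]$ with the removal of the edge $uw$. Throughout, (\ref{onenei}) keeps the edges between $V(G)-B$ and $B$ under control: $v$ is the unique neighbor of $u$ in $B$, and $w$ has at most one neighbor in $B$.

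First I would use Corollary~\ref{cor-1} to pick a $\bc$-coloring $\phi$ of $G[B]$ with $\phi(v)=1$ and form $(G^\phi,\bc^\phi)$; under this construction the edge $uv$ is replaced by $uy_1$ in $G^\phi$ while $uw$ survives (and $u\in N_1$, so $y_1\in S^\phi$). I then consider the modified pair $(H,\bc^H)$ where $H=G^\phi-uw$ and $\bc^H$ agrees with $\bc^\phi$ except that $c_1(u)$ and $c_1(w)$ are each decremented by $1$ (capped at $-1$, with Lemmas~\ref{no-3vtx} and~\ref{no-2vtx} used to branch onto a color-$2$ decrement in the boundary cases). Any $\bc^H$-coloring of $H$ extends, by reinstating $uw$, to a $\bc^\phi$-coloring of $G^\phi$, which combined with $\phi$ on $B$ is a $\bc$-coloring of $G$, contradicting the criticality of $G$. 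Hence $H$ admits no $\bc^H$-coloring.

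Next I would verify that $(H,\bc^H)$ is smaller than $(G,\bc)$ in the ordering given by (A) and (B): when $|B|\geq 3$ the vertex count drops by at least one, and when $|B|=2$ the $+9$ recovered from deleting $uw$ exceeds the potential decrements, strictly raising $\rho(V)$. A brief top-vertex audit confirms that no $y_i$ or vertex of $V(G)-B$ becomes a new top vertex in $(H,\bc^H)$. By Lemma~\ref{noncolorable} there exists $F\subseteq V(H)$ with $\rho_{H,\bc^H}(F)\leq-6$ and $y_2\in F$ (or $y_1\in F$ if $N_2=\emptyset$, an analogous easier case). Translating $F$ back to $F'\subseteq V(G)$ by substituting $B$ for $\{y_1,y_2\}\cap F$, an analysis across the subcases defined by whether $y_1\in F$, whether $u,w\in F$, and whether $F'=V(G)$ yields in each case either $\rho_G(F')<-1$ with $F'\subsetneq V(G)$ (contradicting Corollary~\ref{geq-1}) or $\rho_G(V(G))\leq-6$ (contradicting that $(G,\bc)$ is a counterexample).

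The main obstacle is the subcase $F=V(H)$ with $y_1,y_2,u,w\in F$. Here the naive translation gives only $\rho_G(V(G))\leq -4+j$, which is still compatible with $(G,\bc)$ being a counterexample; closing the gap requires combining the $+9$ contribution from the missing edge $uw$ with the forced membership $u\in N_1$ (since $\phi(v)=1$), which ensures $|F\cap N_1|\geq 1$ and contributes the extra $-9$ needed to drive $\rho_G(F')$ below $-6$. A parallel technicality is the $|B|=2$ branch, where Lemma~\ref{noncolorable} does not apply directly to $B$; this case is handled by a direct coloring extension argument based on Lemma~\ref{lem012} combined with the same edge-deletion trick on $uw$.
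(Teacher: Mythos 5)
Your overall architecture (color $B$ by Corollary~\ref{cor-1} with $\phi(v)=1$, pass to $(G^\phi,\bc^\phi)$, delete the edge $uw$, decrement capacities, and translate a low-potential set back into $G$) is the paper's architecture, but you decrement the wrong capacities, and this breaks the reduction at its first step. You lower $c_1(u)$ and $c_1(w)$ and assert that any $\bc^H$-coloring $\psi$ of $H=G^\phi-uw$ extends to $G^\phi$ by reinstating $uw$. That is false exactly when $\psi(u)=\psi(w)=2$: reinstating $uw$ gives each of $u,w$ one more neighbor of color $2$, and neither $c_2(u)$ nor $c_2(w)$ was reduced to absorb it. Worse, this bad case is the generic one: since $\phi(v)=1$ we have $u\in N_1$, so $u$ is adjacent to $y_1$, which in any coloring of $G^\phi$ is forced to take color $1$ with zero residual color-$1$ capacity; hence $\psi(u)=2$ is forced, and nothing prevents $\psi(w)=2$. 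The paper decrements $c_2(u)$ and $c_2(w)$ instead, and that is precisely what makes the extension sound: with $\psi(u)=2$ forced, the only dangerous scenario is both endpoints colored $2$, which the two color-$2$ decrements handle. It also makes the potential bookkeeping close without any extra input: each decrement costs $3$ rather than $4$, so the case $u,w\in F$ gives $\rho_{G,\bc}(F')\le -6+j$ outright, and the case where one of $u,w$ is missing gives $\rho_{G,\bc}(F')\le j$ on a nontrivial set properly containing $B$, contradicting the choice of $B$.

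Your proposed repair of the ``main obstacle'' does not close the gap either: the extra $-9$ you want to extract from $u\in N_1$ is the edge $uy_1$, which is already counted inside $\rho_{H,\bc^H}(F)$ when $u,y_1\in F$ and becomes the edge $uv$ of $G[F']$ after translation (by \eqref{onenei} each vertex outside $B$ has exactly one such edge), so it cannot be charged a second time; with $c_1$-decrements the all-in case really does stall at $-4+j$. Two smaller points: Lemma~\ref{noncolorable} is stated for $(G^\phi,\bc^\phi)$, not for the edge-deleted, capacity-modified pair, so the existence of $F$ must instead be drawn directly from the extremal choice (A)--(B) of $(G,\bc)$ (noting that deleting $uw$ returns $+9$ to the total potential while the two decrements cost only $6$); and with that in hand the $|B|=2$ branch needs no separate coloring argument.
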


{\bf Proof.} Suppose $w\in N(u)-B-T$. Let $\phi$ be a $\bc$-coloring of $G[B]$ such that $\phi(v)=1$ (such a coloring exists by Corollary \ref{cor-1}).

Let $(G', \bc')$ be obtained from $(G^{\phi},\bc^{\phi})$ as follows: Delete edge $uw$, and obtain $\bc'$ from $\bc^{\phi}$  by decreasing each of $c^{\phi}_2(u) $ and $c^{\phi}_2(w)$ by 1.

If $(G',{\bf c}')$ has a ${\bf c}'$-coloring $\psi$, then since $u$ is adjacent to $y_1$, $\phi(y_1)=1$ and $c'_1(y_1)=0$, we know that $\psi(u)=2$. Hence the union of $\psi$ and $\phi$ is a   ${\bf c}$-coloring of $G$, a contradiction. 

Thus $G'$ is not ${\bf c}'$-colorable. 
By the minimality of $(G,{\bf c})$, $\rho_{G',{\bf c}'}(F)\leq -6$ for some subset $F$ of $V(G')$. 
Recall that and $\rho_{G',{\bf c}'}(B^\phi) \ge -3$.
Let $F'=B\cup (F-B^\phi)$.
If $F$ contains both $u,w$, then
$$\rho_{G,{\bf c}}( F')\leq \rho_{G',{\bf c}'}(F)-2(-3)+j-(-3) -9\leq -6+j,$$
a contradiction. 
If at least one of $u,w$ is not in $F$, then
$F'$ is a nontrivial subset of $V(G)$ and
$$\rho_{G,{\bf c}}( F')\leq \rho_{G',{\bf c}'}(F)-(-3)+j-(-3)\leq j,$$
contradicting the choice of $B$. 
\qed

\begin{lemma}\label{nonon-surplusnbg}
No  vertex in $V(G)-B-T$ is adjacent to $B$.
\end{lemma}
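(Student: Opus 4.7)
Suppose for contradiction that some $u \in V(G)-B-T$ is adjacent to $v^*\in B$. By Lemma~\ref{lem03} and~\eqref{onenei}, $u$ has exactly $v^*$ as its single neighbor in $B$, and its remaining neighbors (if any) are top vertices $w_1,\ldots,w_k$.

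My first move is absorption:
\[
\rho_{G,\bc}(B\cup\{u\})=j+\rho(u)-9.
\]
If $\rho(u)\le 9$, then $\rho_{G,\bc}(B\cup\{u\})\le j$, and since $B\subsetneq B\cup\{u\}$ the maximality of $B$ is violated unless $B\cup\{u\}$ is trivial, i.e., $B\cup\{u\}=V(G)$ or $B\cup\{u\}=V(G)\setminus\{w^*\}$ for a top vertex $w^*$. For $B\cup\{u\}=V(G)$, $u$ is a leaf at $v^*$ and I would delete $u$ while decrementing $c_i(v^*)$ by one for a color $i$ with $c_i(u)\ge 0$; then a $\bc'$-coloring of $G-u$ (given by minimality of $(G,\bc)$) extends to $u$ with color $i$, contradicting criticality, while the failure alternative produces a low-potential subset that translates back via the standard bookkeeping to a contradiction with Corollary~\ref{geq-1} or Lemma~\ref{geq-2}. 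For $B\cup\{u\}=V(G)\setminus\{w^*\}$, I compute $\rho_{G,\bc}(V(G))=j+\rho(u)-13$, which is $\le -6$ for most $\rho(u)$ in range and contradicts that $(G,\bc)$ is a counterexample to Theorem~\ref{th1}; the few residual values of $\rho(u)$ I would close with a direct coloring extension on $V(G)=B\cup\{u,w^*\}$, using Corollary~\ref{cor-1} to choose a $\phi$ on $G[B]$ so that the forced colors of $u$ and $w^*$ (determined via Lemma~\ref{lem02} at $v^*$ and at $w^*$'s $B$-neighbor) are consistent with $\bc$.

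For $\rho(u)\ge 10$, so $\bc(u)\in\{(0,3),(1,2),(1,3)\}$, simple absorption is insufficient and I turn to the construction $(G^{\phi},\bc^{\phi})$. Via Corollary~\ref{cor-1}, pick a $\bc$-coloring $\phi$ of $G[B]$ so that Lemma~\ref{lem02} forces the color of $u$ in any extension of $\phi$, and apply Lemma~\ref{noncolorable} to obtain $F\subseteq V(G^{\phi})$ with $\rho_{G^{\phi},\bc^{\phi}}(F)\le -6$ and $y_2\in F$. Translating $F':=(F\setminus S^{\phi})\cup B\cup\{u\}$ using the standard transfer bookkeeping from Lemmas~\ref{geq-2} and~\ref{lem03}, I expect $\rho_{G,\bc}(F')\le j$, contradicting the choice of $B$; residual trivial cases are handled by further adjoining a top vertex and bounding $\rho_{G,\bc}(V(G))\le -6$.

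The main obstacle I anticipate is the bookkeeping when $u$ has large $\rho(u)$ together with several top neighbors $w_i$ whose other endpoints $w_i'$ lie outside $B$. The naive larger absorption
\[
\rho_{G,\bc}(B\cup\{u,w_1,\ldots,w_k\})=j+\rho(u)+5k-9-9k_B,
\]
with $k_B=|\{i:w_i'\in B\}|$, can exceed $j$ when both $k_B$ and $k$ are small, so a pure set-absorption does not close the argument. For these configurations I would combine set-absorption with a coloring argument: track the forced colors of $u$ and each $w_i$ coming from $\phi(v^*)$ and the saturations supplied by Lemma~\ref{lem02}, propagate constraints to the $w_i'$'s, and either produce a $\bc$-coloring of $G$ (contradicting criticality) or extract a subset of potential at most $-6$ via Lemma~\ref{noncolorable}.
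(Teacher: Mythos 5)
Your reduction to the case $\rho(u)\ge 10$ (i.e.\ $\bc(u)\in\{(0,3),(1,2),(1,3)\}$) via absorbing $u$ into $B$ is sound and matches the paper, as does the preliminary use of Lemma~\ref{lem03} and \eqref{onenei}. But the heart of the lemma is exactly the case you defer, and the one concrete step you do assert there is wrong. If you apply Lemma~\ref{noncolorable} to $(G^{\phi},\bc^{\phi})$ with $S=B$ and transfer $F$ back, the bookkeeping gives $\rho_{G,\bc}((F\setminus S^{\phi})\cup B)\le \rho_{G^{\phi},\bc^{\phi}}(F)-\rho_{G^{\phi},\bc^{\phi}}(F\cap S^{\phi})+j\le -6+3+j=j-3$. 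This does \emph{not} contradict the choice of $B$: since every nontrivial set has potential $\ge j$ and every proper trivial set of size $\ge 3$ has potential $\ge -1$ (Lemma~\ref{geq-2}), the only possibility is that the transferred set is all of $V(G)$, and $\rho_{G,\bc}(V(G))\le j-3\in\{-4,-3\}$ is perfectly consistent with $(G,\bc)$ being a counterexample. So "I expect $\rho_{G,\bc}(F')\le j$, contradicting the choice of $B$" and the fallback "bounding $\rho_{G,\bc}(V(G))\le -6$" both fail; the whole difficulty is precisely that the low-potential set refuses to miss any vertex.

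This is why the paper does not use $(G^{\phi},\bc^{\phi})$ alone: it further deletes $u$ together with its top neighbors $N$ and reduces the capacities of selected second neighbors $w_i$ by amounts tuned to $\bc(u)$ and $|N|$, so that either a coloring of the reduced graph extends greedily across $u$ and $N$, or the returned low-potential set can be forced to omit some $w_k$. In the hardest subcase ($u$ a $(1,3)$-vertex with four top neighbors whose other ends lie outside $B$), even that is not enough pointwise: one must run the construction four times to obtain sets $F_1,\dots,F_4$, each containing $B$ and exactly one $w_k$ with $\rho_{G,\bc}(F_k)\le 1+j$, and then combine $F_1\cup F_2\cup F_3$ with $u,x_1,x_2,x_3$ via submodularity to produce a set of potential $\le -4$ that omits $w_4$, contradicting Corollary~\ref{geq-1}. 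Your closing paragraph correctly identifies this configuration as "the main obstacle" but offers only a plan ("track the forced colors\dots propagate constraints"), not an argument; as written, the proposal does not prove the lemma in the only case where it has real content. You also leave unverified the small but necessary subcase $|N_2\cap B|=2$, where the paper shows $B+u+x_1+x_2=V(G)$ and finishes by an explicit coloring extension.
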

\textbf{Proof. }
Suppose $u\in V(G)-B-T$ is adjacent to $v \in B$.
Let $N$ denote the set of neighbors of $u$ in $V(G)-B$.
By Lemma~\ref{lem03}, $N\subseteq T$.
Let  $N = \{x_1,\dots, x_d\}$, and $N_2=\{w_1,\dots, w_d\}$ be the multiset of other neighbors of $x_1,\dots, x_d$. Some of $w_1,\ldots, w_d$ might coincide. 

If $|N_2\cap B|\geq 3$ (as a multiset), say $w_1,w_2,w_3 \in B$, then $\rho_{G,\bc}(B+u+\{x_1,x_2,x_3\})\leq \rho_{G,\bc}(B)+\rho_{G,\bc}(u)-3\times 4 +\rho(edge)\leq 0+14-12-9 = -7$, a contradiction. 
If $|N_2\cap B|=2$, say $w_1, w_2\in B$, then $\rho_{G,\bc}(B+u+x_1+x_2)\leq \rho_{G,\bc}(B) + \rho(u)+\rho(edge)-4\times 2 \leq 0+14-9-8=-3$. By Corollary~\ref{geq-1}, $B+u+x_1+x_2 = V(G)$, and $u$ is a $(1,3)$-vertex. 
By Corollary \ref{cor-1}, there is  a $\bc$-coloring $\phi$ of $G-u-x_1-x_2$ with $\phi(v) = 1$. We extend $\phi$ to $u,x_1,x_2$: let $\phi(x_i)\neq \phi(w_i)$ for $i = 1,2$, and $\phi(u) = 2$. Then $\phi$ is a $\bc$-coloring for $G$, a contradiction.

Thus we may assume $|N_2\cap B|\leq 1$, 
 say $w_2,\dots, w_d \notin B$, while $w_1$ might be in $B$.
And also by the maximality of $B$, we may assume $\rho(u)\geq 10$. In other words, $u$ can only be a $(1,3)$-vertex, or a $(0,3)$-vertex, or a $(1,2)$-vertex.
\vspace{2mm} \\
\textbf{Case 1: $u$ is a $(1,2)$- or $(0,3)$-vertex.}

By Corollary \ref{cor-1}, there is a $\bc$-coloring $\phi$ of $G[B]$ with $\phi(v) = 1$.

Let $(G', \bc')$ be obtained from $(G^{\phi}, \bc^{\phi})$ by 
deleting $N \cup \{u\}$, and  by decreasing $c^{\phi}_1(w_i)$ by 1 for $i = 3,\dots, d$.

If $G'$ has a $\bc'$-coloring $\psi$, then we extend $\theta = \phi \cup \psi$ to $u$
and $N$: let $\theta(u) = 2$, and $\theta(x_j) = 1$ for $j = 3,\dots, d$, $\theta(x_i)\neq \theta(w_i)$ for $i=1,2$. Then $\theta$ is a $\bc$-coloring on $G$, a contradiction.

Thus there is a subset $F$ of $V(G')$ with $\rho_{G',\bc'}(F)\leq -6$.
Let $F' = F-B^\phi+B+u +N'$, where $N'\subset N$ is the set of  top  vertices connecting $u$ and $F-B^\phi+B$. 
Let $p = |N'|$. Note that $\rho_{G',\bc'}(F \cap B^\phi) \ge \rho_{G',\bc'}(y_2) =-3$.
If $u$ is a $(0,3)$-vertex, then
$$
\rho_{G,\bc}(F')\leq \rho_{G',\bc'}(F) - \rho_{G',\bc'}(y_2)+\rho_{G,\bc}(B)+\rho_{G,\bc}(u)+\rho(edge)-4p+4p
\leq -6+3+0+10-9 = -2.
$$ 
By Corollary~\ref{geq-1}, $F' = V(G)$. So $w_1,w_2 \in F'$ and hence
\begin{multline*}
 \rho_{G,\bc}(F')\leq \rho_{G',\bc'}(F) - \rho_{G',\bc'}(y_2)+\rho_{G,\bc}(B)+\rho_\bc(u)+\rho(edge)-4d+4(d-2)  \\
 \leq -6+3+0+10-9-8 = -10,
\end{multline*}
a contradiction.

Similarly, if $u$ is a $(1,2)$-vertex, and $B^\phi \subset F$, then
$$
\rho_{G,\bc}(F')\leq \rho_{G',\bc'}(F) - \rho(B^\phi)+\rho_{G,\bc}(B)+\rho_\bc(u)+\rho(edge)-4p+4p
\leq -6+2+0+11-9 = -2.
$$
By Corollary~\ref{geq-1}, $F' = V(G)$. So $w_1,w_2 \in F'$ and hence $\rho_{G,\bc}(F')\leq  -10,$ a contradiction.
If $|B^\phi| = 2$ and $B^\phi \cap F = \{y_2\}$, then $\rho_{G,\bc}(F')\leq -1$. This is again a contradiction to the maximality of $B$ since $F'$ is nontrivial and contains $B$.
\vspace{2mm} \\
\textbf{Case 2: $u$ is a $(1,3)$-vertex.}
 
\vspace{3mm}

\noindent
\textbf{Case 2.1 $d \le 3$.} 
Let $\phi$ be  a coloring of $G[B]$ with $\phi(v) = 1$.
Let $(G',\bc')$ be obtained from $(G^{\phi}, \bc^{\phi})$ by deleting $u+N$ (without reducing the capacity of 
any vertex). Then $G'$ has a $\bc'$-coloring $\psi$. We take $\theta=\phi\cup \psi$ and extend it to $u$ by letting $\theta(u) = 2$ and $\theta(x_i)\neq \theta(w_i)$ for each $i\in [d]$. Then $\theta$ is a $\bc$-coloring on $G$, a contradiction.
\vspace{1mm}
\\
\textbf{Case 2.2 $d = 4$.} 
Let $\phi$ be  a coloring of $G[B]$ with $\phi(v) = 1$. Let $(G', \bc')$ be obtained from $(G^{\phi}, \bc^{\phi})$ by deleting $u+N$, and reducing $c^{\phi}_1(w_2)$ by 1.

If $G'$ has a $\bc'$-coloring $\psi$, then we extend $\theta = \phi\cup \psi$ to $u + N$ by letting $\theta(u) = 2$, $\theta(x_i) \neq \theta(w_i)$ for $i = 1,3,4$, and $\theta(x_2) = 1$. Then $\theta$ is a $\bc$-coloring on $G$, a contradiction.

Thus there is some $F\subset V(G')$ with $\rho_{G',\bc'}(F)\leq -6$.
Let $F' = F-B^\phi+B \subset V(G)$.
If $B^\phi\subset F$, then as $y_1 \in B^\phi$, $\rho_{G',\bc'}(B^\phi)=-2$, and hence 
$$
    \rho_{G,\bc}(F')\leq  \rho_{G',\bc'}(F) - \rho_{G',\bc'}(B^\phi) + \rho_{G,\bc}(B) + \rho_{G, \bc}(w_2)-\rho_{G',\bc'}(w_2)
    \leq -6-(-2)+j+4=j. 
$$
As $u \notin F'$, $F'$ is  a nontrivial subset of $V(G)$. This contradicts the choice of $B$. 

Thus we may assume $|B^\phi| = 2, B^\phi \cap F = \{y_2\}$.  Also by Corollary~\ref{geq-1}, $w_2\in F$, otherwise $\rho_{G,\bc}(F)\leq -6 + 2 = -4$.  Hence
$$
    \rho_{G,\bc}(F')\leq  \rho_{G',\bc'}(F) -  \rho_{G',\bc'}(y_2) + \rho_{G,\bc}(B) + \rho_{G, \bc}(w_2)-\rho_{G',\bc'}(w_2) 
    \leq -6-(-3)+j+4 \le 1+j.
$$

If $w_1 \in F'$ or $w_1 \in B$, then let 
$F''=F-B^\phi+B+\{u, x_1,x_2\}$.  In this case,
\begin{eqnarray*}
    \rho_{G,\bc}(F'')&\leq&  \rho_{G',\bc'}(F) -  \rho_{G',\bc'}(y_2) + \rho_{G,\bc}(B) + \rho_{G, \bc}(w_2)-\rho_{G',\bc'}(w_2) +\rho_{G,\bc}(\{u,x_1,x_2\})+ 5 \rho(edge) 
    \\
    &\leq& -6-(-3)+j+4 + 3 \times 14 - 5 \times 9 \le -2.
\end{eqnarray*}

If  $w_4 \in F''$, then $\rho_{G,\bc}(F''+x_4)\leq -6$, a contradiction. Thus $F''$  contradicts to    Corollary~\ref{geq-1}.
Hence, $w_1 \notin   F'$. Similarly, we can show that $w_3,w_4 \notin F'$. 

By symmetry, for each $k \in [4]$, there is some $F_k\subset V(G)-u-N$ containing $B$ and  such that  $F_k\cap N_2 = \{w_k\}$, with $\rho_{G,\bc}(F_k)\leq 1+j$.
By submodularity of potentials,
$$\rho_{G,\bc}(F_1\cup F_2)\leq \rho_{G,\bc}(F_1)+\rho_{G,\bc}(F_2)-\rho_{G,\bc}(F_1\cap F_2) \leq 1+1+2j-j \le 2.$$
(Note that $\rho_{G,\bc}(F_1\cap F_2) \ge j$, since $B\subset F_1\cap F_2$). Similarly,
$\rho_{G,\bc}(F_1\cup F_2\cup F_3)\leq 3 $. Then
$\rho_{G,\bc}(F_1\cup F_2\cup F_3 \cup \{ u,x_1,x_2,x_3\})\leq 3+\rho_\bc (u)+\rho(edge)-4\times 3 = 3+14-9-12=-4$, a contradiction to Corollary~\ref{geq-1}, since $w_4 \notin F_1\cup F_2\cup F_3\cup \{ u,x_1,x_2,x_3\}$.
\vspace{1mm}
\\
\textbf{Case 2.3 $d\geq 5$.}
Let $\phi$ be  a $\bc$-coloring of $G[B]$ with $\phi(v) = 2$.
Let $(G', \bc')$ be obtained from $(G^{\phi}, \bc^{\phi})$ by deleting $u+N$, and reducing $c^{\phi}_2(w_i)$ by 1 for $i=2,\ldots, d$. 

If $G'$ has a $\bc'$-coloring $\psi$, then we extend $\theta = \phi\cup \psi$ to $u + N$ by letting $\theta(u) = 1$, $\theta(x_1) \neq \theta(w_1)$, and $\theta(x_j) = 2$ for $j = 2,\dots, d$. Then $\theta$ is a $\bc$-coloring on $G$, a contradiction. 

Thus there is some $F\subset V(G')$ with $\rho_{G',\bc'}(F)\leq -6$.
We may assume $y_2\in F$.
Let $F_1 = F\cap \{w_1\}, F_2 = F\cap N_2 \setminus F_1$, and $p_1 = |F_1|, p_2 = |F_2|$, $N'_1$ denotes the  top  vertices connecting $u$ and $F_1$, $N'_2$ denote the  top  vertices connecting $u$ and $F_2$, and let $F' = F-B^\phi+B+u+N'_1+N'_2\subset V(G)$. Then
\begin{multline*}
    \rho_{G,\bc}(F')\leq  \rho_{G',\bc'}(F) - \rho_{G',\bc'}(B^\phi \cap F) + \rho_{G,\bc}(B) +\rho(edge)+\rho_{\bc}(u) - 4(p_1+p_2) +3p_2 \\
    \leq -6-(-3)+j-9+14-4p_1-p_2 = 2-4p_1-p_2+j.
\end{multline*}
If $p_1 = 1$, then $\rho_{G,\bc}(F')\leq -2$. By Corollary~\ref{geq-1}, $F' = V(G)$. But then since $p_2 = d-1 \geq 4$, $\rho_{G,\bc}(F')\leq 2-4-4=-6$, a contradiction.
Thus $p_1 = 0$. 

If $p_2\geq 2$,
then $\rho_{G,\bc}(F')\leq j$,   a contradiction to the choice of $B$ since $w_1\notin F'$.

If $p_2=0$, then $\rho_{G,\bc}(F-B^\phi+B) \le -6 + 3 = -3$, contrary to 
 Corollary~\ref{geq-1}. 

Suppose $p_2 = 1$, say $F_2 = \{w_2\}$.
Let $F'' = F-B^\phi+B$. Then 
$$\rho_{G,\bc}(F'')
\leq \rho_{G',\bc'}(F) - \rho(B^\phi) + \rho_{G,\bc}(B) + \rho_{G, \bc}(w_2)-\rho_{G', \bc'}(w_2)
    \leq -6-(-3)+j+3=j,
$$
contradiction to the choice of $B$ since $u\notin F''$.
\qed

Now we prove that $B$ does not exist.

Let $N$ be the set of vertices in $V(G)-B$ adjacent to $B$. By the Lemma~\ref{nonon-surplusnbg}, $N\subset T$.
Let $d = |N|$ and denote $N = \{x_1,\dots, x_d\}$, let $N' = \{w_1,\dots, w_d\} \subset V(G)-B$ be the (multi)set of the other neighbor of vertices in $N$.

Fix a $\bc$-coloring $\phi$ on $G[B]$. Define $N_i\subset N'$ so that for each $w_j\in N_i$, the other neighbor of $x_j$ in $B$ is colored $3-i$. Let $G' = G-B-N$, $\bc'_i(w_j) = \bc_i(w_j)-1$ for $w_j\in N_i$ and $i\in \{1,2\}$.
If $(G',\bc')$ has a coloring $\psi$, then we can extend $\theta = \phi\cup \psi$ to $N$ by letting $\theta(x_j) \neq \theta(y_j)$ (suppose $y_j$ is the neighbor of $x_j$ in $B$ for each $j$). Then $\theta$ is a $\bc$-coloring on $G$, a contradiction.

Thus there is some $F\subset V(G')$, with $\rho_{G',\bc'}(F)\leq -6$. Let $N_F\subset N$ be the  top  vertices connecting $B$ and $F$. Then
\begin{multline*}
    \rho_{G,\bc}(B+N_F+F)\leq \rho_{G,\bc}(B) +\rho_{G',\bc'}(F)-4|N_F|+\sum_{w\in F\cap N'}(\rho_\bc(w)-\rho_{\bc'}(w))
    \\
    \leq 
    0-6-4|N_F|+4|N_F| =-6,
\end{multline*}
 a contradiction. This yields the following.

\begin{lemma}\label{nontrivial}
Suppose $\emptyset\neq  S\subset V(G)$ is nontrivial. Then $\rho_{G,\bc}(S)\geq 1$. 
\end{lemma}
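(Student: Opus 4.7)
\emph{Proof plan.} The plan is to argue by contradiction. Suppose some nontrivial $S\subset V(G)$ has $\rho_{G,\bc}(S)\leq 0$. By Corollary~\ref{geq-1}, the minimum potential $j$ of a nontrivial subset satisfies $-1\leq j\leq 0$. Let $B$ be a largest nontrivial subset with $\rho_{G,\bc}(B)=j$. Using the structural work already established, every external neighbor of $B$ is a top vertex (Lemma~\ref{nonon-surplusnbg}) and has its other neighbor outside $B\cup N$, where $N=\{x_1,\dots,x_d\}$ denotes this set of boundary top vertices, with outer neighbors $w_1,\dots,w_d$.

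The idea is to peel off $B\cup N$ and reduce to a strictly smaller instance. Fix any $\bc$-coloring $\phi$ of $G[B]$, which exists by $\bc$-criticality of $G$. By Lemma~\ref{lem02}, in any extension of $\phi$ to $G$ each $x_j$ is forced to the color opposite that of its $B$-neighbor. Form $(G',\bc')$ by deleting $B\cup N$ and, for each $j$, decreasing by one the capacity of $w_j$ in the color $x_j$ is forced to take. If $(G',\bc')$ were $\bc'$-colorable, combining such a coloring with $\phi$ and the forced choices on $N$ would yield a $\bc$-coloring of $G$, contradicting criticality.

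Hence, by the minimality of $(G,\bc)$, some $F\subseteq V(G')$ satisfies $\rho_{G',\bc'}(F)\leq -6$. Let $N_F\subseteq N$ consist of those $x_j$ with $w_j\in F$, and set $S^*=B\cup N_F\cup F$. The heart of the argument is a single bookkeeping inequality: adding $B$ contributes $j$ to the potential; each vertex in $N_F$ contributes its own potential $14$ but also incurs two new incident edges (one to $B$ and one to $F$), i.e.\ $-18$ each; and restoring the original capacities at vertices of $F\cap N'$ contributes at most $4$ per unit reduction. Combining these gives
$$
\rho_{G,\bc}(S^*) \;\leq\; \rho_{G',\bc'}(F) + j + 14|N_F| - 18|N_F| + \sum_{w\in F\cap N'}\bigl(\rho_\bc(w)-\rho_{\bc'}(w)\bigr) \;\leq\; -6 + j - 4|N_F| + 4|N_F| \;\leq\; -6,
$$
contradicting that $(G,\bc)$ is a counterexample to Theorem~\ref{th1}, which forces $\rho_{G,\bc}(A)>-6$ for every $A\subseteq V(G)$.

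The main obstacle I anticipate is the careful potential bookkeeping: reductions of $c_1$ lower $\rho$ by $4$ while reductions of $c_2$ lower it only by $3$, and several $x_j$ may share the same outer neighbor $w$, so one has to argue in the multiset sense. The saving grace is that only the one-sided bound $\sum_{w\in F\cap N'}(\rho_\bc(w)-\rho_{\bc'}(w))\leq 4|N_F|$ is needed, which holds since each unit reduction contributes at most $4$ and the total number of reductions is $|N_F|$. Degenerate boundary cases (e.g.\ $|B|=2$, handled by Lemma~\ref{lem012}, or $S^*=V(G)$, handled by the counterexample hypothesis $\rho_{G,\bc}(V(G))>-6$) should fall out of the same accounting.
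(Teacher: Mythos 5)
Your proposal is correct and takes essentially the same route as the paper: after invoking Lemma~\ref{nonon-surplusnbg} to force every boundary neighbor of $B$ into $T$, you delete $B\cup N$ with the same per-color capacity reductions at the outer neighbors $w_j$ and run the identical potential accounting on $B\cup N_F\cup F$, reaching $\rho\leq -6$. The technical points you flag (the multiset nature of the $w_j$ and the bound of at most $4$ per unit capacity reduction) are handled exactly the same way in the paper's argument.
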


\section{$G$ has no vertices with high potential and low degree}\label{nohl}

\begin{lemma}\label{nobadvtx01}
There is no $(1,3)$-vertex in $G$ with exactly one normal neighbor and at most 4  top  neighbors.
\end{lemma}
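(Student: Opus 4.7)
Suppose for contradiction that such a $(1,3)$-vertex $v$ exists, with normal neighbor $u$ and top neighbors $x_1,\dots,x_d$ where $d\le 4$. Let $w_i$ denote the other neighbor of $x_i$; since top vertices are pairwise non-adjacent (by the minimality of $G$), each $w_i$ is normal (possibly $w_i=u$). The strategy is to form $G' := G - v - \{x_1,\dots,x_d\}$ with $\bc'$ agreeing with $\bc$ on $V(G')$ except that $c_1'(u) := \max\{-1, c_1(u) - 1\}$ and $c_2'(u) := \max\{-1, c_2(u) - 1\}$. For $d \ge 2$, $v$ is normal with degree $\ge 3$, so removing $v$ strictly decreases $|V - T|$; no $w_i$ can be top (top vertices are non-adjacent), so no ``top-to-normal'' transitions arise, and the reductions at $u$ force $\rho_{\bc'}(u)\le 7<14$, so $u$ cannot become top in $(G',\bc')$. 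Hence $|V(G') - T(G')| < |V(G) - T(G)|$, and by the choice of $(G, \bc)$ the pair $(G', \bc')$ is not a counterexample to Theorem~\ref{th1}. The cases $d \in \{0, 1\}$ are handled directly by extending a $\bc$-coloring of $G - v$ (possibly recoloring the leaf $x_1$ when $d = 1$).

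If $G'$ admits a $\bc'$-coloring $\psi$, I extend it to $G$ by setting $\phi(x_i) := 3 - \psi(w_i)$ for each $i$ and choosing $\phi(v) \in \{1,2\}$ based on $\psi(u)$ and $a := |\{i : \psi(w_i) = 2\}|$. The choice $\phi(v) = 1$ satisfies $v$'s constraint iff $[\psi(u) = 1] + a \le 1$, while $\phi(v) = 2$ satisfies it iff $[\psi(u) = 2] + (d - a) \le 3$. A direct enumeration of the pairs $(\psi(u), a)$ shows that for $d \le 4$ at least one inequality always holds. The reductions at $u$ accommodate the incremented count in the two delicate cases $(\phi(v),\psi(u))=(1,1)$ and $(2,2)$, while the constraints at $x_i$ and $w_i$ are immediate from $\phi(x_i) = 3 - \psi(w_i)$. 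So $\phi$ is a $\bc$-coloring of $G$, contradicting $\bc$-criticality.

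Otherwise, some $F \subseteq V(G')$ has $\rho_{G', \bc'}(F) \le -6$. Since $\bc' = \bc$ outside $u$,
\[
\rho_{G, \bc}(F) \le -6 + \bigl(4[c_1(u) \ge 0] + 3[c_2(u) \ge 0]\bigr)[u \in F].
\]
If $u \notin F$, then $\rho_{G, \bc}(F) \le -6$ and $F$ is a proper nonempty subset of $V(G)$ (since $v \notin F$), contradicting Corollary~\ref{geq-1}. If $u \in F$, let $\ell := |\{i : w_i \in F\}|$ and set $F' := F \cup \{v\} \cup \{x_i : w_i \in F\}$; a direct computation gives $\rho_{G, \bc}(F') \le 6 - 4\ell$. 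For $\ell \ge 2$ this contradicts Lemma~\ref{nontrivial} when $F'$ is a proper nontrivial subset of $V(G)$, and contradicts the counterexample assumption $\rho(V(G))>-6$ when $F'=V(G)$ (possibly after adjusting by the missing top vertex). For $\ell \le 1$, a finer analysis combines the maximality of $F$ (which forces $d_F(w_i) \le 1$ for $w_i \notin F$), the triangle structure on $\{u, v, x_i\}$ when $w_i = u$, and submodularity of $\rho$ to produce a nontrivial subset of potential $\le 0$, again contradicting Lemma~\ref{nontrivial}. The main obstacle is precisely this $\ell \le 1$ case, where the weaker pullback bound (caused by restoring $u$'s capacity at both coordinates) forces one to exploit the local structure around $v$ to construct the needed low-potential nontrivial subset.
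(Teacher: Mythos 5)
Your setup (delete the $(1,3)$-vertex and its top neighbors, reduce both capacities of the unique normal neighbor by one, extend a coloring back by a parity count on the $w_i$) matches the first half of the paper's argument, and your colorability extension is correct for $d\le 4$. The problem is the non-colorable branch, and it is a genuine gap, not a detail.

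Reducing both $c_1(u)$ and $c_2(u)$ costs $7$ in potential, while Lemma~\ref{nontrivial} only guarantees that nontrivial subsets of $V(G')$ have potential $\ge 1$. So the set $F$ with $\rho_{G',\bc'}(F)\le -6$ that you obtain pulls back only to $\rho_{G,\bc}(F)\le 1$ with $u\in F$ --- which is consistent with everything already known and carries no contradiction by itself. Your recovery $F'=F\cup\{v\}\cup\{x_i: w_i\in F\}$ only bites when enough of the $w_i$ lie in $F$: adding $v$ costs $+5$, and each $x_i$ costs $-4$ only if $w_i\in F$. For $\ell\le 1$ you get $\rho(F')\ge 2$, and the tools you invoke (maximality of $F$, the triangle when $w_i=u$, submodularity) do not manufacture the missing low-potential set; indeed your own text concedes this is ``the main obstacle'' without resolving it. Even in the $\ell=2$ subcase with $F'=V(G)$ you only reach $\rho(V(G))\le -2$, which does not contradict the counterexample hypothesis $\rho(G,\bc)>-6$. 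The paper avoids this dead end by splitting \emph{before} the reduction: if every subset of $V(G')$ containing the normal neighbor has potential $\ge 2$, the $7$-cost reduction is affordable ($\ge 2-7=-5$) and colorability follows outright; otherwise there is a set $W\ni v$ of potential $\le 1$, and the paper uses Corollary~\ref{cor-1} together with the $(G^{\phi},\bc^{\phi})$ contraction (reducing only $c_1(w_i)$, cost $4$) to produce, for \emph{each} $i$, a set $U_i$ containing both $v$ and $w_i$ of potential $\le 1$; submodularity then bounds $\rho(\bigcup U_i)\le 4$, and adding $u$ and all the $x_i$ --- each now incident to \emph{two} vertices of the union --- drives the potential to $-7$. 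The essential idea you are missing is that the contradiction must come from a low-potential set that already contains the second neighbors $w_i$, so that re-attaching the top vertices decreases rather than increases the potential.
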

\textbf{Proof. } Suppose $u$ is such a vertex, $v$ is its normal neighbor of $u$,
$N := \{x_1,\dots, x_d\}$ ($d = d(u)-1 \leq 4$) are the  top  neighbours of $u$, and $w_1,\dots, w_d$ (not necessarily distinct) are the other neighbors of $x_1,\dots, x_d$, respectively. 

Let $G' = G-u-N$. 
Suppose every set $W\subset V(G')$ containing $v$ has $\rho_{G',\bc}(W) \geq 2$.
Then we form $\bc'$ from $\bc$ by letting $c'_i(v) = c_i(v)-1$ for $i = 1,2$. 
By our assumption, $\rho_{G',\bc'}(A)\geq 2-3-4=-5 $ for any subset $A$ of $V(G')$. By the minimality of $G$, $G'$ has a $\bc'$-coloring $\phi$. We can  extend $\phi$ to a $\bc$-coloring of $G$ as follows:
let $\phi(x_i)\neq \phi(w_i)$ for $i\in [d]$,
let 
$\phi(u) = 2$ if there are at most 3 vertices in $N(u)$ colored $2$, or $\phi(u)=1$ if there are at least 4 
vertices in $N(u)$ colored 2, and hence at most one vertex in $N(u)$ colored 1.

Thus $v$ lies in some sets in $G'$ with 
$\bc$-potential at most $1$. Among all such sets, let $W$ be maximum in size.
\vspace{1mm}
\\
\textbf{Claim. }For every $i\in [d]$, there is some $U_i\subset V(G')$ containing $w_i$ and $v$, with $\rho_{G',\bc}(U_i)\leq 1$.
\\
\textit{Proof of Claim. } 
Suppose $(*)$: for all $U\subset V(G')$ containing $v$ and $w_1$, $\rho_{G',\bc}(U)\geq 2$.
Then $w_1\notin W$.
By Corollary \ref{cor-1},  there is a $\bc$-coloring $\phi$ of $G[W]$ with $\phi(v) = 1$.
Let $(G'', \bc'')$ be obtained from $(G^{\phi}, \bc^{\phi})$ by deleting $u + N$, and reducing 
$c^{\phi}_1(w_1)$ by 1.

If $G''$ has a $\bc''$-coloring $\psi$, then we extend $\theta = \phi\cup \psi$ to $u + N$ by letting $\theta(u) = 2$, $\theta(x_i) \neq \theta(w_i)$ for $i=2,\dots,d$, and $\theta(x_1) = 1$. Then $\theta$ is a $\bc$-coloring on $G$, a contradiction.

Thus there is some $F\subset V(G'')$ with $\rho_{G'',\bc''}(F)\leq -6$.
Let $F$ be minimum in potential and maximum in size among all such sets.
Then $\rho_{G'',\bc''}(F\cap W^\phi)\leq -2$ and $w_1\in F$.
Let $F' = F-W^\phi+W \subset V(G')$.
Then 
$$
    \rho_{G',\bc}(F')\leq \rho_{G'',\bc''}(F) +\rho_{G',\bc}(W)-\rho_{G'',\bc''}(F\cap W^\phi) +\rho_\bc(w_1)-\rho_{\bc''}(w_1) \\
    \leq 
    -6+1-(-2)+4 = 1,
$$
contradicts our assumption $(*)$. This completes the proof of the claim. 
$\bowtie$
\vspace{1mm}
\\
Let $U_1,\dots, U_d$ be as in the statement of the Claim. 
By submodularity of potential,
$$
1+1\geq \rho_{G',\bc}(U_i)+\rho_{G',\bc}(U_j) \geq \rho_{G',\bc}(U_i \cup U_j) + \rho_{G',\bc}(U_i \cap U_j)\geq \rho_{G',\bc}(U_i \cup U_j) +0,
$$
for $i,j\in [d], i\neq j$.
Thus $\rho_{G',\bc}(U_i \cup U_j)\leq 2$. We do this argument iteratively, then we get  $\rho_{G',\bc}(U:= \bigcup_{i=1}^d U_i)\leq 4$.
Then in $G$,
$$
\rho_{G,\bc}(U+u+N)\leq \rho_{G',\bc}(U)+\rho_\bc(u)+\rho(edge) - 4\times 4
\leq 4+14-9-16=-7,
$$
a
contradiction.
\qed
\begin{lemma}\label{nobadvtx011}
There is no $(1,2)$-vertex in $G$ with a normal neighbor and at most 3  top  neighbors.
\end{lemma}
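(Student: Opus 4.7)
The plan is to follow the three-step outline of Lemma~\ref{nobadvtx01}: first I would find a low-potential set through the normal neighbor $v$, then I would derive low-potential sets through each $w_i$, and finally I would combine via submodularity for a counting contradiction. Let $v$ be the normal neighbor of $u$, let $N = \{x_1, \ldots, x_d\}$ be the top neighbors ($d \le 3$), let $w_i$ be the other neighbor of $x_i$, and set $G' = G - u - N$. The two places where the hypothesis $\bc(u) = (1,2)$ and $d \le 3$ will matter are the extension step that colors $u$ and the final arithmetic; note that $\rho_\bc(u) = 1 + 4 + 6 = 11$ now, not $14$.

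In Step~1, I plan to show that some $W \subseteq V(G')$ contains $v$ and satisfies $\rho_{G',\bc}(W) \le 1$. Otherwise I would form $\bc'$ by reducing both coordinates of $\bc(v)$ by $1$; every subset of $V(G')$ then has $\bc'$-potential at least $-5$, so by the minimality of $(G,\bc)$ the graph $G'$ admits a $\bc'$-coloring $\phi$. I would extend $\phi$ by $\phi(x_i) \ne \phi(w_i)$ (each $x_i$ is a top $(1,3)$-vertex) and color $u$ as follows: if at most $2$ neighbors of $u$ are in color $2$, set $\phi(u) = 2$; else at least $3$ are in color $2$, hence at most $d + 1 - 3 \le 1 = c_1(u)$ are in color $1$, so $\phi(u) = 1$ works.

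For Step~2, I would prove the key claim: for each $i \in [d]$, some $U_i \subseteq V(G')$ contains both $v$ and $w_i$ with $\rho_{G',\bc}(U_i) \le 1$. Assuming this fails for $i = 1$, I would use (the argument of) Corollary~\ref{cor-1}, extended to subsets of potential $\le 2$ via the remark following Lemma~\ref{lem02}, to produce a $\bc$-coloring $\phi$ of $G[W]$ with $\phi(v) = 1$. Then I would form $(G^\phi, \bc^\phi)$, delete $u + N$, and reduce $c^\phi_1(w_1)$ by $1$ to obtain $(G'', \bc'')$. Any $\bc''$-coloring $\psi$ would extend to $G$ via $\theta(u) = 2$, $\theta(x_1) = 1$, and $\theta(x_i) \ne \theta(w_i)$ for $i \ge 2$, which is valid because $u$ then has at most $d - 1 \le 2 = c_2(u)$ color-$2$ neighbors; this is precisely where $d \le 3$ enters. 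Otherwise some $F \subseteq V(G'')$ with $\rho_{G'',\bc''}(F) \le -6$ of maximum size must contain $w_1$ and have $\rho_{G'',\bc''}(F \cap W^\phi) \le -2$, so the set $(F - W^\phi) \cup W$ has $\bc$-potential at most $-6 + 1 + 2 + 4 = 1$ and contains both $v$ and $w_1$, contradicting the assumption.

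In Step~3, pairwise intersections of the $U_i$ contain $v$ and therefore have potential $\ge 0$ by Lemma~\ref{lem011}, so iterating submodularity gives $\rho_{G',\bc}(U) \le d$ for $U := \bigcup_{i=1}^d U_i$. Counting the $1 + 2d$ new edges (namely $uv$, the $ux_i$, and the $x_i w_i$),
\[
\rho_{G,\bc}(U \cup \{u\} \cup N) \le \rho_{G',\bc}(U) + (\rho_\bc(u) - 9) + d(\rho_\bc(x_i) - 18) = \rho_{G',\bc}(U) + 2 - 4d.
\]
For $d = 3$ this is $\le -7$, contradicting the counterexample bound $\rho(G, \bc) \ge -5$. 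I expect the main technical obstacle to be the fine-grained case analysis inside Step~2, specifically verifying that the maximum-size $F$ contains $w_1$ and intersects $W^\phi$ in a way yielding $\rho_{G'',\bc''}(F \cap W^\phi) \le -2$; this has to be done exactly as in the proof of Lemma~\ref{nobadvtx01} but tracked through the slightly different parameters.
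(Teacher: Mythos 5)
Your proposal is correct and is essentially the paper's intended argument: the paper omits this proof with the remark that it is ``very similar'' to that of Lemma~\ref{nobadvtx01}, and your three steps (a low-potential set through $v$, low-potential sets $U_i$ through each $w_i$ and $v$, then iterated submodularity and the edge count giving $\rho_{G',\bc}(U)+2-4d$) reproduce that proof with $\rho_{\bc}(u)=11$, $c_2(u)=2$ and $d\le 3$ correctly substituted at the two places you identify (the extension step coloring $u$ and the final arithmetic). The one caveat is that, like the paper's own write-up of Lemma~\ref{nobadvtx01} (which only computes the case $d=4$), your closing computation is carried out only for $d=3$; for $d\le 2$ the bound $2-3d$ is not below $-6$ and one must finish differently (greedily for $d\le 1$, and via Corollary~\ref{geq-1} and Lemma~\ref{nontrivial} applied to $U\cup\{u\}\cup N$ when it is a nontrivial proper subset for $d=2$), which is exactly the same loose end the paper itself leaves.
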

\textbf{Proof. }
The proof is very similar to that of the previous lemma. So we omit it.
\qed

\begin{lemma}\label{no136vtx}
There is no $(1,3)$-vertex in $G$ with degree at most six, and all whose neighbors are  top  vertices.
\end{lemma}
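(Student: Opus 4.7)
\emph{Plan.} Suppose for contradiction that such a vertex $u$ exists: $u$ is a $(1,3)$-vertex with $d := d_G(u) \le 6$ and every neighbor $x_i \in N(u) = \{x_1, \ldots, x_d\}$ is a top vertex; let $w_i$ denote the other neighbor of $x_i$. I would first dispatch the easy range $d \le 5$ by direct extension: by the criticality of $G$, $G - u$ has a $\bc$-coloring $\phi$, and to extend $\phi$ to $u$ the options $\phi(u)=1$ (which works if $\#\{i : \phi(x_i)=1\} \le c_1(u) = 1$) and $\phi(u)=2$ (which works if $\#\{i : \phi(x_i)=2\} \le c_2(u) = 3$) can both fail only when $\#\{i : \phi(x_i)=1\} \ge 2$ and $\#\{i : \phi(x_i)=2\} \ge 4$, forcing $d \ge 6$. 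Hence $d \le 5$ yields a $\bc$-coloring of $G$, a contradiction.

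For the remaining case $d = 6$, the plan is to set $G' = G - u - N(u)$ and define $\bc'$ on $V(G')$ by $c'_2(w) = \max\{-1, c_2(w)-1\}$ at each $w$ that appears as some $w_i$, while leaving $\bc$ unchanged elsewhere. Since $u$ is not top (else $u$ would be a top vertex adjacent to the top vertex $x_1$, contradicting the no-adjacent-tops property used throughout this section), the $x_i$'s are removed from $T$, and the modified $w_i$'s lose top status in $(G',\bc')$ (their $\bc'$-potential strictly drops below $14$), we obtain $|V(G') - T(G',\bc')| = |V(G) - T(G,\bc)| - 1$. By the minimality of $(G,\bc)$, the pair $(G',\bc')$ is not a counterexample to Theorem~\ref{th1}, so either it admits a $\bc'$-coloring $\psi$, or there is $F \subset V(G')$ with $\rho_{G',\bc'}(F) \le -6$.

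In the colorable case, I would extend $\psi$ to $G$ as follows. Let $B := \{i : \psi(w_i) = 2\}$, and by default set $\phi(x_i) = 3 - \psi(w_i)$ (opposite to $\psi(w_i)$, which is automatically safe for $w_i$'s constraint). If $|B| \ge 3$ set $\phi(u) = 2$, so that $\#\{i:\phi(x_i)=2\} = 6 - |B| \le 3$; if $|B| \le 1$ set $\phi(u) = 1$, so that $\#\{i:\phi(x_i)=1\} = |B| \le 1$; and if $|B| = 2$ set $\phi(u) = 1$ and flip one $x_{i_0}$ with $i_0 \in B$ to color $2$. This flip is safe because the reduction in $c'_2(w_{i_0})$ leaves exactly one unit of color-$2$ slack at $w_{i_0}$, accommodating $x_{i_0}$ as a new color-$2$ neighbor; after the flip $\#\{i:\phi(x_i)=1\} = 1$, and a routine check at $u$, the $x_i$'s, and the $w_i$'s shows that all constraints hold, producing a $\bc$-coloring of $G$, a contradiction.

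The main obstacle is the subset case. Translating a bad $F$ to $G$ gives $\rho_{G,\bc}(F) \le -6 + 3k$ where $k$ is the number of distinct $w_i$'s in $F$: for $k \in \{0,1\}$ this contradicts Corollary~\ref{geq-1} (since $F$ is a proper subset of $V(G)$), and for $k = 2$ it contradicts Lemma~\ref{nontrivial} (since such $F$ is a nontrivial proper subset with $\rho \le 0$). For $k \ge 3$ the naive enlargement $F \cup \{u\} \cup N$ only gives $\rho_{G,\bc}(F \cup \{u\} \cup N) \le 38 + 3k - 9m'$ where $m' = \#\{i : w_i \in F\}$, which is not negative enough when multiplicities are small. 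To close this gap I would instead apply Lemma~\ref{noncolorable} directly with $S = \{u\} \cup N(u)$ and a carefully chosen $\bc$-coloring $\phi$ of $G[S]$, extracting a set $F \subset V(G^{\phi})$ with $\rho \le -6$ containing $y_2$; after translating back to $G$ (picking up the $y_i$-$w^*$ edge contributions and the prescribed potentials $\rho(y_1) = 10$, $\rho(y_2) = -3$) and enlarging by $\{u\} \cup N(u)$, one obtains a subset of $V(G)$ whose potential forces a contradiction with Corollary~\ref{geq-1} or with the choice of $(G,\bc)$ as a counterexample maximizing $\rho_{G,\bc}(V(G))$.
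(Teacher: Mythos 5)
Your reduction for $d\le 5$ and your handling of the colorable branch of the $d=6$ case are fine, but the non-colorable branch has a genuine gap that your proposal acknowledges and does not close. When $(G',\bc')$ has a set $F$ with $\rho_{G',\bc'}(F)\le -6$ containing $k\ge 3$ distinct $w_i$'s, translating back gives only $\rho_{G,\bc}(F)\le -6+3k$, and enlarging by $u$ together with the $m'$ tops $x_i$ having $w_i\in F$ gives $\rho\le 8+3k-4m'$; since $m'\le 6$ and $k\le m'$, this is never forced below $0$ (even $k=m'=6$ yields only $\le 2$), so no contradiction with Corollary~\ref{geq-1} or Lemma~\ref{nontrivial} is available. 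Your proposed rescue via Lemma~\ref{noncolorable} with $S=\{u\}\cup N(u)$ cannot work either: $\rho_{G,\bc}(S)=7\cdot 14-6\cdot 9=44$ while $\rho_{G^{\phi},\bc^{\phi}}(S^{\phi})\in\{-3,-2\}$, so substituting $S$ back for $S^{\phi}$ in a deficient set $F$ raises the potential by roughly $46$ (minus edge corrections that are far too small), and the $-6$ deficit is swamped. The root of the problem is that a single application of minimality produces only one low-potential set, which may see only a few of the $w_i$'s, so the six tops cannot all be charged against it.

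The paper closes exactly this gap by a different mechanism. Instead of decrementing $c_2$ at all the $w_i$'s at once, it argues one $w_i$ at a time: if every subset of $V(G')$ containing $w_i$ had potential $\ge 2$, one could lower \emph{both} capacities of $w_i$ (a potential drop of $7$, still leaving everything $\ge -5$), color $G'$ by minimality, and extend to $G$ using the double slack at $w_i$ to place $x_i$ last. Hence each $w_i$ lies in some $U_i\subseteq V(G')$ with $\rho_{G',\bc}(U_i)\le 1$. Taking each $U_i$ maximum and applying submodularity together with Lemma~\ref{nontrivial} (to bound $\rho(U_i\cap U_j)\ge 0$), the union $U=\bigcup U_i$ has potential at most $4$ or $5$ depending on how the $U_i$'s overlap the multiset $\{w_1,\dots,w_6\}$, and crucially $U$ (or $\bigcup_{i\le 5}U_i$) is guaranteed to contain all (or all but one) of the $w_i$'s; then $\rho_{G,\bc}(U+u+N)\le 4+14-24=-6$, or $\rho_{G,\bc}(U'+u+N')\le 5+14-20=-1$ with $w_6$ omitted, contradicting the choice of $G$ or Lemma~\ref{nontrivial}. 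To repair your proof you would need to import this idea of extracting one low-potential set per $w_i$ and merging them by submodularity; the single-shot reduction does not suffice.
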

\textbf{Proof. }
Suppose $u$ is such a vertex.
We may assume $d(u) = 6$, otherwise we can extend a $\bc$-coloring form $G-u$ greedily to $u$.
Let $N=N(v) = \{x_1,\dots, x_6\}$ and $N_2 = \{w_1,\dots, w_6\}$ be a multiset consists of the other neighbor of $x_i$'s.
Let $G' = G-u-N$.

Suppose for some $w_i\in N_2$, every set $U\subset V(G')$ containing $w_i$ has $\rho_{G',\bc}(U)\geq 2$.
Then by the first part of the proof of Lemma~\ref{nobadvtx01}, let $c'_j(w_i) = c_j(w_i)-1$ for $j = 1,2$, and $G'$ will have a $\bc'$-coloring $\phi$. We extend $\phi$ to $u$ and $N$:
let $\phi(x_j)\neq \phi(w_j)$ for $j\neq i$. If at this point we cannot color $u$ by $2$, then there are at least four $x_j$'s colored by $2$ already. In other words, at most one $x_j$ is colored $1$ at this point. We let $\phi(u) = 1$, and $\phi(x_i) = 2$;
If we can color $u$ by $2$ when there are only $u$ and $x_i$ uncolored, then we let $\phi(u) = 2$, $\phi(x_i) = 1$. 
In either case $\phi$ is a $\bc$-coloring on $G$, a contradiction.

Hence for each $i\in [6]$, there is some $U_i\subset V(G')$ containing $w_i$, with $\rho_{G',\bc}(U_i)\leq 1$. 
Let $U_i$ be maximum in size for each $i$.
By Lemma~\ref{nontrivial},
$\rho_{G',\bc}(U_i\cap U_j)\geq 0$ since $U_i\cap U_j$ is nontrivial and might be empty. 
By submodularity of potential, 
$$
\rho_{G',\bc}(U_i\cup U_j) \leq \rho_{G',\bc}(U_i)+\rho_{G',\bc}(U_j)-\rho_{G',\bc}(U_i\cap U_j)\leq 1+1-0=2.
$$
Let $U = \bigcup_{i=1}^6 U_i$. 
If for some $i,j\in [6]$, $|(U_i\cup U_j)\cap N_2|\geq 4$, 
say $w_1,\dots,w_4\in U_1\cup U_2$,
then since $U_i$'s are chosen maximum in size, 
$U = U_1\cup U_2\cup U_5\cup U_6$.
By iteratively applying the same submodularity argument, we get $\rho_{G',\bc}(U)\leq 4$.
Then in $G$,
$$
\rho_{G,\bc}(U+u+N)\leq \rho_{G',\bc}(U)+\rho(u)-4\times 6 \leq 4+14-24=-6,
$$
a contradiction. 
Therefore, there is some $w_i$, say $w_6$, whose corresponding $U_6$ intersects with $N_2$ at only $w_6$, and $w_6\notin U_i$ for $i\in [5]$.  
Let $U' = \bigcup_{i=1}^5 U_i, N' = \{x_1,\dots, x_5\}$. Then $\rho_{G',\bc}(U')\leq 5$. In $G$,
$$
\rho_{G,\bc}(U'+u+N')\leq \rho_{G',\bc}(U')+\rho(u)-4\times 5 \leq 5+14-20=-1,
$$
contradiction to Lemma~\ref{nontrivial} since $w_6\notin U'+u+N'$.
\qed

\begin{remark}\label{rmk2}
In the previous proof, $w_i$'s are not necessarily distinct. But if some of them coincide, it will only decrease the potential of $\rho_{G',\bc}(U)$.
\end{remark}

\begin{lemma}\label{no125vtx}
There is no $(1,2)$-vertex in $G$ with degree at most five all whose neighbors are  top  vertices.
\end{lemma}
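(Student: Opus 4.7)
The plan is to mimic the proof of Lemma \ref{no136vtx}, adjusting for the fact that $u$ is now a $(1,2)$-vertex with $\rho(u)=11$ and $d(u)\le 5$. If $d(u)\le 4$, take a $\bc$-coloring of $G-u$ and extend greedily: among the at most four neighbors of $u$, either at most one is colored $1$ (then set $\phi(u)=1$) or at most two are colored $2$ (then set $\phi(u)=2$), so the $(1,2)$-capacity of $u$ always suffices.

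Assume $d(u)=5$. Let $N=\{x_1,\dots,x_5\}$ be the top neighbors of $u$, let $w_i$ be the other neighbor of $x_i$, and set $G'=G-u-N$. The first step is the analogue of the interior claim in Lemma \ref{no136vtx}: for each $i\in[5]$ there exists $U_i\subseteq V(G')$ with $w_i\in U_i$ and $\rho_{G',\bc}(U_i)\le 1$. If not, decrease both coordinates of $\bc(w_i)$ by $1$ to obtain $\bc'$; every subset of $V(G')$ then has $\bc'$-potential at least $-5$, so by the minimality of $(G,\bc)$ there is a $\bc'$-coloring $\phi$ of $G'$. Extend $\phi$ by $\phi(x_j)\ne\phi(w_j)$ for $j\ne i$ and then choose $\phi(u),\phi(x_i)$. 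Writing $a,b$ for the number of color-$1$ and color-$2$ vertices among $\{x_j:j\ne i\}$ (with $a+b=4$), a brief case analysis shows that a valid extension always exists; the only slightly delicate case is $(a,b)=(2,2)$, handled by $\phi(u)=2$ and $\phi(x_i)=1$. This contradicts the non-$\bc$-colorability of $G$.

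Choose each $U_i$ maximum in size. As in Lemma \ref{no136vtx}, submodularity combined with the bound $\rho_{G',\bc}(U_i\cap U_j)\ge 1$ for nonempty intersection (coming from Lemmas \ref{lem011} and \ref{nontrivial}) forces any two intersecting $U_i,U_j$ to coincide; the distinct sets among $U_1,\dots,U_5$ are therefore pairwise disjoint. Let $t$ denote their number. Iterated submodularity gives $\rho_{G',\bc}(U)\le t$ for $U=\bigcup_iU_i$, and because every $w_i$ lies in $U$, the key computation becomes
\[
\rho_{G,\bc}(U\cup\{u\}\cup N) = \rho_{G',\bc}(U) + 11 + 5\cdot 14 - 9\cdot 10 = \rho_{G',\bc}(U)-9.
\]
When $t\le 3$, this bounds $\rho_{G,\bc}(U\cup\{u\}\cup N)\le -6$, contradicting either the counterexample assumption (if the set equals $V(G)$) or Corollary \ref{geq-1} (if it is a proper nonempty subset, whence $\rho\ge -1$).

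When $t\ge 4$, the only possible class-size vectors summing to $5$ are $(2,1,1,1)$ and $(1,1,1,1,1)$, each of which contains a singleton class; after relabeling, $w_5$ lies alone in its class, so $w_5\notin U_i$ for $i\in[4]$. Set $U''=\bigcup_{i=1}^4 U_i$ (a union of at most $t-1\le 4$ distinct classes, giving $\rho_{G',\bc}(U'')\le 4$) and $S'=U''\cup\{u\}\cup\{x_1,\dots,x_4\}$; the parallel computation
\[
\rho_{G,\bc}(S') = \rho_{G',\bc}(U'') + 11 + 4\cdot 14 - 9\cdot 8 = \rho_{G',\bc}(U'') - 5 \le -1
\]
holds, while $S'$ misses at least $x_5$ and $w_5$ and is therefore a proper nontrivial subset of $V(G)$, contradicting Lemma \ref{nontrivial}. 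The main technical difficulty is the extension step in the Claim, where one must simultaneously accommodate the tight $(1,2)$-capacity of $u$, the $(1,3)$-capacity of $x_i$, and the reduced capacity of $w_i$ under $\bc'$ across all five $(a,b)$ configurations.
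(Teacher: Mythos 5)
Your proof is correct and follows essentially the same route as the paper's (which itself just defers to the argument of Lemma~\ref{no136vtx}): reduce to $d(u)=5$, produce for each $w_i$ a set $U_i\ni w_i$ of potential at most $1$ via the capacity-reduction-and-recoloring claim, and then combine the $U_i$ by submodularity to exhibit a set of forbidden potential. Your only departure is in organizing the overlaps --- you use the bound $\rho(U_i\cap U_j)\ge 1$ from Lemmas~\ref{lem011} and~\ref{nontrivial} to force maximum-size $U_i$'s to be pairwise equal or disjoint, a slightly cleaner bookkeeping than the paper's case split on $|(U_i\cup U_j)\cap N_2|\ge 4$, but it yields the same two contradictions.
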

\textbf{Proof. }
Suppose $u$ is such vertex.
Again we may assume $d(u) = 5$.
Let $N=N(v) = \{x_1,\dots, x_5\}$ and $N_2 = \{w_1,\dots, w_5\}$ be a multiset consists of the other neighbor of $x_i$'s.
Let $G' = G-u-N$.
By a similar argument as in the last proof, we may assume for each $i\in [5]$, there is some $U_i\subset V(G')$ containing $w_i$, with $\rho_{G',\bc}(U_i)\leq 1$. 
Let $U = \bigcup_{i=1}^5 U_i$.
Let $U_i$ be maximum in size for each $i$.
If for some $i,j\in [5]$, $|(U_i\cup U_j)\cap N_2|\geq 4$, 
say $w_1,\dots,w_4\in U_1\cup U_2$,
then since $U_i$'s are chosen maximum in size, 
$U = U_1\cup U_2\cup U_5$.
By iteratively applying the same submodularity argument, we get $\rho_{G',\bc}(U)\leq 3$.
Then in $G$,
$$
\rho_{G,\bc}(U+u+N)\leq \rho_{G',\bc}(U)+\rho(u)-4\times 5 \leq 3+11-20=-6,
$$ a
contradiction. 

Thus we may assume that  $w_5\notin U' := \bigcup_{i=1}^4 U_i$. Then by submodularity, $\rho_{G',\bc}(U')\leq 4$.
Let $N' = N-x_5$.
In $G$,
$$
\rho_{G,\bc}(U'+u+N')\leq \rho_{G',\bc}(U')+\rho(u)-4\times 4 \leq 4+11-16=-1,
$$
a contradiction to Lemma~\ref{nontrivial} since $w_5\notin U'+u+N'$.
\qedsymbol

\section{Discharging}\label{dis}

Let the initial charge of each vertex $v$ be  $\rho_{G,{\bf c}}(v)-4.5d(v)$. Thus the sum of charges equals $\rho_{G,\bc}(V(G))$.
The discharging rule is simple: each  top  vertex gives $2.5$ to each of its neighbors. The new charge $ch(x)$ is $0$ when $x$ is a  top  vertex and hence 
\begin{equation}\label{ch}
\rho_{G,{\bf c}}(V(G))=\sum_{v\in V(G)- T}ch(v).
\end{equation}
For $v\in V(G)-T$, let  $d_1(v)$ be the number of normal neighbors of $v$, and  $d_2(v)$ be the number of  top  neighbors of $v$. Then
\begin{equation}\label{chv}
ch(v)=1+4c_1(v)+3c_3(v)-4.5d_1(v)-2d_2(v).
\end{equation}

\begin{lemma}\label{chleq0}
For every vertex $u\in V(G)$, $ch(u)\leq 0$.
\end{lemma}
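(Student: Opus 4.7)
The plan is to prove $ch(v)\le 0$, i.e.\ $\rho(v)\le 4.5\, d_1(v)+2\, d_2(v)$, by a case analysis on $\bc(v)$. Lemmas \ref{no-3vtx}, \ref{no-2vtx}, and \ref{lem011} already eliminate $\bc(v)\in\{(-1,0),(0,-1),(-1,1)\}$, so $\rho(v)\ge 1$ in all remaining cases; and since $v$ is normal by assumption, the case $\bc(v)=(1,3)$ with $d(v)=2$ is also excluded. This reduces the lemma to a finite check over the remaining types $\bc(v)$ and the possible splits $d(v)=d_1(v)+d_2(v)$.

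For the low-potential types with $\rho(v)\le 5$ (namely $(0,0),(1,-1),(-1,2),(0,1),(1,0)$), the inequality is mild and follows from a minimum-degree bound: from a $\bc$-coloring of $G-v$, assign $v$ a color greedily using that $c_i(v)\ge 0$ for at least one $i$; this extension rules out $d(v)=1$ and often $d(v)=2$. If $v$ has only top neighbors, a refined argument deletes $v$ together with $N(v)$, decrements the appropriate $c_i$ at the second neighbors, and invokes Lemma \ref{nontrivial} on any resulting bad subset to obtain the needed lower bound on $d_2(v)$. The same template handles the intermediate-potential types $(-1,3),(0,2),(0,3),(1,1)$ (of potentials $6,7,10,8$).

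For the high-potential types $(1,3)$ and $(1,2)$ the established reducibility lemmas cover the extremes. For a $(1,3)$-vertex: $d_1(v)=0$ forces $d_2(v)\ge 7$ via Lemma \ref{no136vtx}, yielding $ch(v)\le 14-14=0$; $d_1(v)=1$ forces $d_2(v)\ge 5$ via Lemma \ref{nobadvtx01}, yielding $ch(v)\le -0.5$; and $d_1(v)\ge 4$ trivially gives $ch(v)<0$. Analogously, Lemmas \ref{no125vtx} and \ref{nobadvtx011} handle the extremes for a $(1,2)$-vertex.

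The main obstacle is the leftover intermediate subcases. For a $(1,3)$-vertex, the pairs $(d_1(v),d_2(v))\in\{(2,0),(2,1),(2,2),(3,0)\}$, and for a $(1,2)$-vertex, the pair $(2,0)$, still give $ch(v)>0$ and are not excluded by the prior lemmas. I would rule each of these out by a tailored reducibility argument in the style of Lemma \ref{nobadvtx01}: delete $v$ together with a carefully chosen subset of its top neighbors, decrement $c_1$ or $c_2$ at the second neighbors in a way matched to $(d_1,d_2)$, and either extend a $\bc'$-coloring back to $G$ using the color capacities at $v$ (and the looseness of the top vertices), or convert a bad subset $F$ of the modified graph (of potential $\le -6$) into a subset of $V(G)$ of potential $\le 0$, contradicting Lemma \ref{nontrivial}. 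Calibrating, in each intermediate subcase, the right combination of deletions and $\bc$-decrements so that the potential bookkeeping closes is the most delicate step.
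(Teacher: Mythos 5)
Your reduction to a finite check is the same as the paper's (the paper organizes by $d_1(u)$ rather than by $\bc(u)$, but the residual hard cases come out the same: a $(1,3)$-vertex with $(d_1,d_2)\in\{(3,0),(2,0),(2,1),(2,2)\}$ and a $(1,2)$- or $(0,3)$-vertex with $(d_1,d_2)=(2,0)$), and your use of Lemmas~\ref{nobadvtx01}--\ref{no125vtx} for the extremes is exactly what the paper does. Two problems remain. First, a smaller one: your template ``take a $\bc$-coloring of $G-u$ and color $u$ greedily'' is not sound whenever $u$ has a normal neighbor, because giving $u$ color $i$ adds a color-$i$ neighbor to every neighbor already colored $i$ and may push a normal neighbor past \emph{its} capacity (top neighbors are safe only because they are $(1,3)$-vertices of degree $2$ with slack in color $2$). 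This is why the paper, in every case with $d_1(u)\ge 1$, first decrements $c_2$ at each normal neighbor of $u$ --- legitimate because by Lemma~\ref{nontrivial} every relevant subset has potential at least $1$ and each decrement costs only $3$ --- and only then extends. In particular your $(0,3)$, $d_1=2$, $d_2=0$ subcase is not covered by the template as you state it.

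The substantive gap is that the subcases you yourself call ``the most delicate step'' are precisely where the proof lives, and you have only promised them. For each residual case the paper deletes $u$ together with its top neighbors, decrements $c_2$ at the normal neighbors, and then, given a set $S$ of potential at most $-6$ in the modified pair, runs the following bookkeeping: if $S$ omits some decremented neighbor, restoring that decrement raises the potential by at most $3$, producing a proper subset of $V(G)$ of potential at most $-3$ and contradicting Corollary~\ref{geq-1}; hence $S$ contains all (or, in the $d_1=3$ case, at least two) of the decremented neighbors, and then $S+u$ together with the top vertices joining $u$ to $S$ has potential at most $-6$, or at most $-4$ while still missing a neighbor of $u$, contradicting Corollary~\ref{geq-1} or Lemma~\ref{nontrivial}. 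The step that makes the arithmetic close --- forcing enough of the decremented neighbors into $S$ before adding $u$ back --- is exactly the calibration you defer, so as written the proposal is a correct plan with the right inventory of cases, not yet a proof.
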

\textbf{Proof. }
Suppose for some vertex $u$, $ch(u)>0$.
Let $c_1:= c_1(u), c_2:=c_2(u), d_1:= d_1(u), d_2:=d_2(u)$.
Then we have 
$$
ch(u) = 4c_1+3c_2+1-\frac{9}{2} d_1-2d_2>0
\Rightarrow 
9d_1+4d_2\leq 8c_1+6c_2+1 \leq 27.
$$

\vspace{3mm}

\textbf{Case 1: $d_1\geq 3$.}
Then we have $d_1 = 3, d_2 = 0, c_1 = 1, c_2 = 3$.
Let $N(u) = \{ x,y,z\}$. 
Form $(G',\bc')$ from $(G,\bc)$: $G' = G-u$, $c'_2(v) = c_2(v)-1$ for $v\in N(u)$, and $\bc'$ agrees with $\bc$ everywhere else.
If $G'$ has a $\bc'$-coloring $\phi$, then by letting $\phi(u) = 2$, $\phi$ is extended to a $\bc$-coloring on $G$, a contradiction.
Thus there is some $S\subset V(G')$ with $\rho_{G',\bc'}(S)\leq -6$.
If $N(u)\subset S$, then 
\begin{multline*}
    \rho_{G,\bc}(S+u)\leq \rho_{G',\bc'}(S) + \sum_{v\in N(u)}( \rho_\bc(v)-\rho_{\bc'}(v)) +\rho_\bc(u) +3\rho(edge)
    \\
    \leq -6+3\times 3+14-9\times3 =-10,
\end{multline*}
a contradiction.

If $|S\cap N(u)|= 2$, then in $G$, 
$$\rho_{G,\bc}(S+u)\leq \rho_{G',\bc'}(S) + 2\times 3 + \rho_\bc(u)+2\rho(edge) \leq -6+6+14-9\times 2=-4.$$
But $S+u\neq V(G)$ since a neighbor of $u$ is not included, a contradiction to Corollary~\ref{geq-1}.
However, by the same corollary we must have $|S\cap N(u)|\geq 2$, otherwise $\rho_{G,\bc}(S) \leq -6+3 = -3$, again a contradiction.
\vspace{2mm}
\\
\textbf{Case 2: $d_1 = 2$.}
Then there are three possible cases:
\begin{center}
\begin{tabular}{c c | c c} 
 $d_1$ & $d_2$ & $c_1$ & $c_2$ \\ [0.5ex] 
 \hline
 $2$ & $\leq 2$ & $1$ & $3$\\ 
$2$ & $0$ & $1$ & $2$ \\
$2$ & $0$ & $0$ & $3$ \\ 
\end{tabular}    
\end{center}
In the case of the first row, $d_2\geq 1$ since by assumption $u$ is not a  top  vertex.

Let $v,w$ be the normal neighbors of $u$, $N$ be the set of  top  neighbors of $u$, and $N_2$ be the set of the other neighbor of vertices in $N$. 
By the table above, $|N|\leq 2$.

Form $(G',\bc')$: $G' = G-u-N$, $c'_2(v) = c_2(v)-1, c'_2(w) = c_2(w) - 1$, $\bc'$ agrees with $\bc$ everywhere else.
If $G'$ has a $\bc'$-coloring $\phi$, then we extend $\phi$ to $u+ N$: color every vertex in $N$ differently from its neighbor in $N_2$. If now we cannot color $u$ by $2$, then by the table, all the neighbors of $u$ are colored $2$. In this case we let $\phi(u) = 1$. 

Therefore, there is some $S\subset V(G')$ with $\rho_{G',\bc'}(S) \leq -6$. By the argument in Case 1,
$v,w\in S$.
If $d_2 = 0$, then
$$
    \rho_{G,\bc}(S+u) \leq \rho_{G',\bc'}(S) + 2\times 3+ \rho_\bc(u) + 2\rho(edge) \leq -6+6+11-18=-7,
    $$
a contradiction. 
If $d_2 > 0$, let $N'\subset N$ be the  top  vertices connecting $u$ and $S$, then when $N'\neq \emptyset$,
$$
    \rho_{G,\bc}(S+u+N') \leq \rho_{G',\bc'}(S) + 2\times 3+ \rho_\bc(u) + 2\rho(edge) - 4 \leq -6+6+14-18 - 4=-8,
    $$
contradiction. When $N' = \emptyset$, 
$\rho_{G,\bc}(S+u) \leq -4$, a contradiction to Lemma~\ref{nontrivial}.
\vspace{2mm}
\\
\textbf{Case 3: $d_1 = 1$.}
By Lemma~\ref{nobadvtx01} and Lemma~\ref{nobadvtx011}, below are all the possible cases:
\begin{center}
\begin{tabular}{c c | c c} 
 $d_1$ & $d_2$ & $c_1$ & $c_2$ \\ [0.5ex] 
 \hline
 $1$ & $1$ & $1$ & $1$\\ 
$1$ & $\leq 2$ & $0$ & $3$ \\
$1$ & $1$ & $0$ & $2$ \\ 
\end{tabular}    
\end{center}
Let $v$ be the normal neighbor of $u$.
Let $c'_2(v) = c_2(v)-1$, and $\bc'$ agrees with $\bc$ everywhere else.
Then by Lemma~\ref{nontrivial}, $G-u$ has a 
$\bc'$-coloring $\phi$. We can extend $\phi$ to a $\bc$-coloring on $G$ by letting $\phi(u) = 2$.
\vspace{2mm}
\\
\textbf{Case 4: $d_1 = 0$.}
By Lemma~\ref{no136vtx} and ~\ref{no125vtx}, the only possible cases are:
\begin{center}
\begin{tabular}{c c | c c} 
 $d_1$ & $d_2$ & $c_1$ & $c_2$ \\ [0.5ex] 
 \hline
 $0$ & $\leq 3$ & $1$ & $1$\\ 
$0$ & $2$ & $1$ & $0$ \\
$0$ & $\leq 4$ & $0$ & $3$ \\ 
$0$ & $\leq 3$ & $0$ & $2$ \\ 
$0$ & $2$ & $-1$ & $3$ \\ 
\end{tabular}    
\end{center}
Since in each case, $c_1+c_2+1\geq d_2$, we can extend a $\bc$-coloring from $G-u$ to $u$ greedily.
\qed

We have the following consequence of Lemma~\ref{chleq0} and (\ref{ch}):
\begin{corollary}\label{geq-5}
    For every vertex $v\in V(G)$, $ch(v)\geq -5$.
\end{corollary}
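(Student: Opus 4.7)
The plan is to combine Lemma~\ref{chleq0}, which upper-bounds each charge by $0$, with the fact that $(G,\bc)$ is a minimum counterexample to Theorem~\ref{th1}, which lower-bounds the total charge. Concretely, since Theorem~\ref{th1} fails at $(G,\bc)$ we have $\rho(G,\bc)\ge -5$; in particular, taking $A=V(G)$ in the definition of $\rho(G,\bc)$ yields the global lower bound $\rho_{G,\bc}(V(G))\ge -5$. For any top vertex $v\in T$ the discharging rule forces $ch(v)=0\ge -5$, so only the vertices of $V(G)-T$ need attention.

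For a fixed $v\in V(G)-T$, equation~(\ref{ch}) together with Lemma~\ref{chleq0} gives
$$
-5 \;\le\; \rho_{G,\bc}(V(G)) \;=\; \sum_{u\in V(G)-T} ch(u) \;\le\; ch(v) + \sum_{u\in V(G)-T,\, u\ne v} 0 \;=\; ch(v),
$$
which is exactly the desired bound.

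I foresee no real obstacle: the corollary is a one-line pigeonhole consequence of the global identity~(\ref{ch}) and the per-vertex upper bound of Lemma~\ref{chleq0}. Its purpose is presumably to serve as a quantitative floor for the arguments in the final section, where one will aim to exhibit a small collection of non-top vertices whose aggregated new charge is strictly below $-5$, thereby contradicting the counterexample assumption and completing the proof of Theorem~\ref{th1}.
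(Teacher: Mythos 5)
Your proof is correct and is exactly the argument the paper intends: the corollary is stated as a consequence of Lemma~\ref{chleq0} and the identity~(\ref{ch}), combined with $\rho_{G,\bc}(V(G))\ge -5$ from the counterexample assumption (using integrality of potentials). Nothing further is needed.
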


\section{Finishing proof of Theorem \ref{th1}}

We now finish the proof of   Theorem~\ref{th1}.
Let $G_q$ be the graph with $V(G_q)=V(G)-T$
 and $E(G_q) = E_1\cup E_q$, where $E_1=E(G-T)$ and $E_q$ is constructed as follows:  for each  top  vertex $x\in V(G)$  adjacent to vertices $u$ and $v$, we add to $E_q$ edge $x$  with endpoints $u$ and $v$. We call such $x\in E_h$ a {\em quasi-edge}.
 Note that $G_q$ may have multiple quasi-edges.
 
For a vertex $v\in V(G_q)$, let $d_1(v)$ denote the number of edges incident to $v$ in $E_1$ and let $d_2(v)$ denote the number of quasi-edges incident to $v$.

For a map $\phi: V( G_q)\rightarrow \{1,2\}$,
we define 
$$d^*_\phi (v) = |\{ uv: uv \in E_1, \phi(u)=\phi(v) \}| + \frac{1}{2} |\{ uv: uv \in E_h, \phi(u)\neq \phi(v) \}|. $$
Let $S(\phi)= \sum_{v\in V( G_q)} c_{\phi(v)}(v) - \frac{1}{2}\sum_{v\in V( G_q)} d^*_\phi (v)$, $S= \max_{\phi} S(\phi) $, and let $\psi: V( G_q)\rightarrow \{1,2\}$ be a map with $S(\psi) = S$. 


Suppose for some $u\in V( G_q)$ we have
  $c_{\psi(u)}(u)< d^*_\psi (u).$

 Let mapping $\psi_u$ differ from $\psi$ only on $u$. By Corollary~\ref{geq-5},
we have 
\begin{multline*}
    c_{\psi(u)}(u)- d^*_\psi (u) + c_{\psi_u(u)}(u)- d^*_{\psi_u} (u) = c_1(u)+c_2(u) - d_1(u)-\frac{1}{2}d_2(u) \\
    \geq \frac{1}{4}(4c_1(u) + 3c_2(u)  - \frac{9}{2}d_1(u) -2d_2(u)) \geq \frac{1}{4}(ch(u)-1) \geq  -\frac{3}{2}.
\end{multline*}

By the choice of $\psi$, 
$S(\psi_u) \leq S(\psi) $. And so 
$$S(\psi_u) - S(\psi) =  c(\psi_u(u))- d^*_{\psi_u} (u) - (c(\psi(u))- d^*_\psi (u)) \leq 0$$

Also since $c(\psi(u))- d^*_\psi (u) \equiv 0\mod{1/2} $, 
$c(\psi(u))- d^*_\psi (u) \leq -\frac{1}{2}$, and 
$$
-\frac{3}{2}\leq c(\psi_u(u))- d^*_{\psi_u} (u) + (c(\psi(u))- d^*_\psi (u))\leq -\frac{1}{2}-\frac{1}{2} = -1,
$$
we must have 
\begin{equation}\label{less2}
 \mbox{$c(\psi(u))- d^*_\psi (u)=-\frac{1}{2}$ and $c(\psi_u(u))- d^*_{\psi_u} (u) \in \{-1/2,-1\} $.}   
\end{equation}
Thus 
\begin{equation}\label{-3}
ch(u) \leq 4(c(\psi(u))- d^*_\psi (u) + c(\psi_u(u))- d^*_{\psi_u} (u)) +1 \leq 4\times (-\frac{1}{2}-\frac{1}{2}) + 1 = -3.
\end{equation}
By the choice of $G$,~\eqref{-3} and Lemma~\ref{chleq0},
\begin{equation}\label{less3}
 \mbox{there is at most one  $u$ with $c_{\psi(u)}(u)< d^*_\psi (u)$.}   
\end{equation}

We say a quasi-edge  $xy\in E( G_q)$ is \emph{$\psi$-conflicting}  if $\psi(x)\neq \psi(y)$. Let $G'$ be the spanning subgraph of $ G_q$ where $E(G')$ consists of only  $\psi$-conflicting quasi-edges. 

{\bf Case 1.} There is a vertex $u$ with $c_{\psi(u)}(u)< d^*_\psi (u)$. By~\eqref{less2},
 $ d_{G'}(u)>0$ and is an odd number. Let $C$ be the component in $G'$ containing $u$. Then there is another vertex $v\in C$ with $d_{G'}(v)$ odd.
And by~\eqref{less3}, $c(\psi(v))-d^*_\psi(v)\geq 1/2$.

Let $P$ be a $uv$-path in $G'$. 
Let $G'' = G'-E(P)$.
Add a vertex $v^*$ to $G''$ and add an edge between $v^*$ and every odd-degree vertex in $G''$. Then we can decompose $E(G''+v^*)$ into cycles. Let $\tau$ be a cyclic orientation of these cycles. 
Extend $\tau$ to $E(P)$ so that $P$ is a directed path from $v$ to $u$.

We extend $\psi$ from $ G_q$ to $G$ as follows.
If a  top  vertex $x$ does not correspond to a $\psi$-conflicting quasi-edge in $ G_q$,
then its neighbors are colored the same color $j\in \{1,2\}$; in this case
color $x$ so that $\psi(x)\neq j$. If $x$ corresponds to a $\psi$-conflicting quasi-edge,
let $y$ be the head of this quasi-edge  in $G''$;
then we color $x$ so that $\psi(x) \neq \psi(y)$. 
Since $c(\psi(u))-d^*_\psi(u) = -1/2$, $c(\psi(v))-d^*_\psi(v)\geq 1/2$, and each other normal vertex $w$ has $c(\psi(w))-d^*_\psi(w)\geq 0$, by the orientation $\tau$,
$\psi$ is a coloring on $G$, a contradiction.

{\bf Case 2.}
 $c(\psi(v))-d^*_\psi(v) \geq 0$ for every $v\in V( G_q)$. We extend $\psi$ to $G$ as in Case 1,
 with the simplification that  we do not need path $P$ and let $G'' = G'$.


\end{document}